\newcommand{\dynote}[1]{\todo[color=red!40,linecolor=red!40!black,size=\tiny]{#1}}
\theoremstyle{plain}	
\newtheorem{theorem}{Theorem}[section]
\newtheorem{proposition}[theorem]{Proposition}
\newtheorem{corollary}[theorem]{Corollary}
\newtheorem{lemma}[theorem]{Lemma}
\theoremstyle{definition} 
\newtheorem{definition}[theorem]{Definition}
\newtheorem{example}[theorem]{Example}
\newtheorem*{pf}{Proof}
\title{Embedded surface invariants via the Broda-Petit construction}
\author{I.J.\ Lee and D.N.\ Yetter }
\date{March 2023}
\begin{document}

\begin{abstract}
We recall Petit's construction of ``dichromatic'' invariants of 4--manifolds computed from Kirby diagrams using a nested pair of ribbon fusion categories ${\mathcal B}\subset{\mathcal C}$ as initial data. Along the way we prove a lemma that places the use of formal linear combinations of simple objects with quantum dimensions as coefficients—as in the constructions of Reshetikhin--Turaev, Broda, and Petit—more firmly in the functorial framework favored by the authors. We then show that Hughes--Kim--Miller's banded--unlink presentations of surfaces embedded in 4--manifolds provide a means whereby a Frobenius algebra in ${\mathcal B}$, together with a suitable module over it lying in ${\mathcal C}$, gives rise to an invariant of a surface--4--manifold pair. We provide a class of examples of suitable initial data and compute examples showing that the invariant is sensitive to both genus and knotting.
\end{abstract}

\maketitle

\section{Introduction}

In the late 1990's B. Broda described a surgical approach to Crane-Yetter theory via Kirby calculus \cite{K}, which he subsequently sought to improve in \cite{Br97} by using only ``bosonic'' labels on the 2-handle attaching curves, while using all the irreducible representations of $U_q(sl_2)$ for $q$ a root of unity on the 1-handle attaching curves.  The {\em a priori} more refined ``dichromatic'' invariant, nonetheless, reduced in the case of simply-connected 4-manifolds to an encoding of signature and Euler characteristic, as did Crane-Yetter theory.

The construction was, however, felt to be of sufficient interest that it was generalized by J. Petit \cite{P2008} and B\"{a}renz and Barrett \cite{BB18} to give what they called ``dichromatic'' invariants of smooth 4-manifolds. The details of the two constructions are slightly different:  Petit uses a nested pair of ribbon fusion categories in imitation of Broda's use of the ``bosonic'' representations as a subcategory of $Rep(U_q(sl_2))$, while B\"{a}renz and Barrett use a functor from a spherical category into a ribbon category, as initial ``coloring'' data.  Recently Costantino, Geer, Ha\"{i}oun and Patreau-Mirand \cite{CGHP23} have given similar a construction of (3+1)-dimensional TQFTs using non-semisimple ribbon categories as initial data, including a surgical construction to which the techniques in the present paper should extend.

The present paper will take Petit's construction as a starting point.  In view of the ``no-go'' theorem of Reutter \cite{Reu20}, which shows that semisimple (3+1)-dimensional TQFTs cannot distinguish non-diffeomorphic smooth structures on homeomorphic 4-manfolds, those who regard the only possibly interesting feature of a (3+1)-dimensional TQFT as sensitivity to smooth structure, should regard this work as a toy model.

We will then use a tangle-theoretic rendering of the work of Hughes, Kim and Miller \cite{HKM19, HKM21}, which gives a Kirby-calculus-type presentation of a 4-manifold with an embedded surface as a framed link diagram with ``dotted'' components as in Kirby \cite{K}, together with a ``banded unknot'', to modify the construction to be sensitive to the presence of a specified embedded surface in the 4-manifold, or more precisely, a specified surface embedded in a 4-dimensional (0,1,2)-handlebody.  We will not consider the immersed case Hughes et al. deal with in \cite{HKM21}, but suggest that the reader will, as we did, find the clearer diagrammatics for the embedded case given in that paper helpful.

Throughout we adopt the convention that whenever a diagram of objects and arrows is used syntactically as a statement, the assertion being made is that the diagram commutes.   Moreover, all 4-manifolds considered are connected.


\section{The Broda-Petit construction}

In this section we review Petit's \cite{P2008} generalization of Broda's \cite{Br97} construction, beginning with the necessary notions from category theory to describe the initial coloring data.


\begin{definition}
A {\em monoidal category} $\mathcal C$ is a category $\mathcal C$ equipped
with a functor $\otimes : {\mathcal C} \times {\mathcal C} \rightarrow {\mathcal C}$
and an object $I$, together with natural isomorphisms 
$\alpha : \otimes (\otimes \times 1_{\mathcal C}) \Rightarrow \otimes (1_{\mathcal C}
\times \otimes)$, $\rho : \otimes I \Rightarrow 1_{\mathcal C}$, and 
$\lambda : I\otimes \Rightarrow 1_{\mathcal C}$, satisfying the
pentagon, triangle, and bigon ($\rho_I = \lambda_I$) coherence conditions
(cf. \cite{CWM}).  A {\em tensor category over $K$}, for $K$ some field $K$, is a monoidal abelian category linear over $K$, with
all $-\otimes X$ and $X\otimes -$ exact. 
\end{definition}

\begin{definition}
An object $X$ in an abelian category over $K$ is {\em simple} if its only subobjects are $0$ and $X$, in which case $hom(X,X)$ is a field extension of $K$.  An abelian category is {\em semi-simple} if every object in it isomorphic to a direct sum of simple objects.  A {\em multifusion category} is a semi-simple tensor category with finitely many isomorphism classes of simple objects.  A {\em fusion category} is a multifusion category in which, moreover the monoidal identity $I$ satisfies ${\rm hom}(I,I) \cong K$ (and is thus {\em a fortiori} simple).
\end{definition}

We will be concerned with monoidal categories with additional structure.

\begin{definition} \label{bmc}
A {\em braided monoidal category} is a monoidal category equipped with a
monoidal natural isomorphism $\sigma: \otimes \Rightarrow \otimes(tw)$,
where $tw:  {\mathcal C} \times {\mathcal C} \rightarrow {\mathcal C} \times {\mathcal C}$
is the ``twist functor'' ($tw(f,g) = (g,f)$).  And satisfying
\vspace*{10pt}

\setlength{\unitlength}{0.011in}%
\begin{center} \begin{picture}(335,161)(50,615)
\thinlines
\put( 90,710){\vector( 4, 3){ 40}}
\put(205,760){\vector( 1, 0){ 75}}
\put(205,640){\vector( 1, 0){ 75}}
\put( 90,685){\vector( 1,-1){ 35}}
\put(345,750){\vector( 1,-1){ 40}}
\put(340,650){\vector( 4, 3){ 40}}
\put( 50,690){\makebox(0,0)[lb]{\raisebox{0pt}[0pt][0pt]
{ $(A \otimes B) \otimes C$}}}
\put(120,755){\makebox(0,0)[lb]{\raisebox{0pt}[0pt][0pt]
{  $A \otimes (B \otimes C)$}}}
\put(295,755){\makebox(0,0)[lb]{\raisebox{0pt}[0pt][0pt]
{  $(B \otimes C) \otimes A$}}}
\put(120,635){\makebox(0,0)[lb]{\raisebox{0pt}[0pt][0pt]
{  $(B \otimes A) \otimes C$}}}
\put(290,635){\makebox(0,0)[lb]{\raisebox{0pt}[0pt][0pt]
{  $B \otimes (A \otimes C)$}}}
\put(365,690){\makebox(0,0)[lb]{\raisebox{0pt}[0pt][0pt]
{  $B \otimes (C \otimes A)$}}}
\put( 85,730){\makebox(0,0)[lb]{\raisebox{0pt}[0pt][0pt]
{  $\alpha$}}}
\put(235,770){\makebox(0,0)[lb]{\raisebox{0pt}[0pt][0pt]
{  $\sigma^{\pm 1}$}}}
\put(380,735){\makebox(0,0)[lb]{\raisebox{0pt}[0pt][0pt]
{  $\alpha$}}}
\put( 50,655){\makebox(0,0)[lb]{\raisebox{0pt}[0pt][0pt]
{  $\sigma^{\pm 1} \otimes C$}}}
\put(230,615){\makebox(0,0)[lb]{\raisebox{0pt}[0pt][0pt]
{  $\alpha$}}}
\put(380,655){\makebox(0,0)[lb]{\raisebox{0pt}[0pt][0pt]
{  $B \otimes \sigma^{\pm 1}$}}}
\end{picture} \end{center}

\noindent Here the same choice of $+1$ or $-1$ exponents must be made in all three instances.

A braided monoidal category is a {\em symmetric monoidal category} if the components of $\sigma$ satisfy $\sigma_{B,A} (\sigma_{A,B}) = 1_{A\otimes B}$ for all objects $A$ and $B$.

\end{definition}

As an aside, many authors write out two hexagons involving $\sigma$.  One is the one given above, the other is parenthesized differently and is equivalent to that above with $\sigma^{-1}$.

Braided monoidal categories are, of course, generalizations of the more classical notion of a symmetric monoidal category in which $\sigma_{B,A} = \Sigma_{A,B}^{-1}$.  It will be important for our purposes to distinguish those object for which this equation holds, even when it does not in general:

\begin{definition}
An object $X$ in a braided monoidal category ${\mathcal C},\otimes, I, \alpha, \rho, \lambda, \sigma$ is {\em transparent} if 

\[ \forall Y\in Ob({\mathcal C}) \;\; \sigma_{X,Y} = \sigma_{Y,X}^{-1}\]

The full subcategory of transparent objects is called the M\"{u}ger center of the category.  In a braided fusion category ${\mathcal C}$, if $S_{\mathcal C}$ is a set of representatives of the isomorphisms of simple objects, the denote the subset of $S_{\mathcal C}$ consisting of transparent objects by $T_{\mathcal C}$
\end{definition}

Because we will be working in a context which there is a distinguished subcategory ${\mathcal B}\subset {\mathcal C}$ of a larger braided category, we will also need

\begin{definition}
Given a subcategory ${\mathcal B}\subset {\mathcal C}$ of a braided category ${\mathcal C}$ an object $X$ of ${\mathcal C}$ if {\em ${\mathcal B}$-transparent} if

\[ \forall Y\in Ob({\mathcal B}) \;\; \sigma_{X,Y} = \sigma_{Y,X}^{-1}\]
\end{definition}

\begin{definition}
A {\em right dual} to an object $X$ in a monoidal category $\mathcal C$ is
an object $X^\ast$ equipped with maps 
$\epsilon: X \otimes X^\ast \rightarrow I$ and
$\eta: I \rightarrow X^\ast \otimes X$ such that the compositions

\[ X \stackrel{\rho^{-1}}{\rightarrow} X \otimes I 
\stackrel{X\otimes \eta}{\rightarrow} 
X \otimes (X^\ast \otimes X) \stackrel{\alpha^{-1}}{\rightarrow} 
(X\otimes X^\ast )\otimes
X \stackrel{\epsilon \otimes X}{\rightarrow} I\otimes X 
\stackrel{\lambda}{\rightarrow} X \]

\noindent and

\[ X^\ast \stackrel{\lambda^{-1}}{\rightarrow} I \otimes X^\ast 
\stackrel{\eta \otimes X^\ast }{\rightarrow} 
(X^\ast \otimes X) \otimes X^\ast \stackrel{\alpha}{\rightarrow} 
X^\ast \otimes 
(X \otimes X^\ast) \stackrel{X^\ast \otimes \epsilon}{\rightarrow} 
X^\ast \otimes I 
\stackrel{\rho}{\rightarrow} X^\ast \]

\noindent are both identity maps.

A {\em left dual} $^\ast X$ is defined similarly, 
but with the objects placed on opposite
sides of the monoidal product.
\end{definition}

This type of duality is an abstraction from the sort of duality which exists
in categories of finite dimensional vector spaces. It is not hard to show
that the canonical isomorphism from the second dual of a vector space
to the space generalizes to give canonical isomorphisms 
$k: ^\ast(X^\ast) \rightarrow X$ and $\kappa: (^\ast X)^\ast \rightarrow X$.
In general, however, there are not necessarily even any maps (or non-zero maps in the linear context) from
$X^{\ast \ast}$ or $^{\ast \ast} X$ to X. (cf. \cite{FY.cohere}).
In cases where every object admits a right (resp. left) dual, it is easy to
show that a choice of right (resp. left) dual for every object extends to
a contravariant functor, whose application to maps will be denoted
$f^\ast$ (resp. $^\ast f$), and that the canonical maps noted above
become natural isomorphisms between the compositions of these functors
and the identity functor.  Likewise, it is easy to show that
$(A\otimes B)^\ast$ is canonically isomorphic to $B^\ast \otimes A^\ast$ 
and similarly for left duals.

\begin{definition}
A {\em pivotal category} is a monoidal category with right duals together with a natural isomorphism $a_X:X\rightarrow X^{\ast\ast}$. 
\end{definition}

It is easy to see that a pivotal structure induces the structure of a left dual on each right dual.

\begin{definition}
Let $X$ be an object in a pivotal category $\mathcal C$. Left and right traces $tr_{L}$, $tr_{R} : \coprod_{X\in Ob{\mathcal C}} \mathcal C ( X , X ) \to \mathcal C ( I , I )$ are be defined as follows:

\[ tr_{R} ( f ) = \epsilon_{X} \,( a_X^{-1}f \otimes 1 ) \,\eta_{X^*} \]
\[ tr_{L} ( f ) = \epsilon_{X^*} \,( 1 \otimes fa_X ) \,\eta_X \]

\end{definition}

\begin{definition}
A pivotal category $\mathcal C$ is {\em spherical} if it satisfies $tr_{L}(f)$ = $tr_{R}(f)$ for all endomorphisms $f$.  In the context of spherical categories, we can thus suppress the distinction between left and right traces, and simply write $tr(f)$.

In any spherical category, the {\em (quantum) dimension} of an object $X$ is the endomorphism of the monoidal identity object given by $dim(X) = tr(1_X)$, the trace of its identity map.  In the case of a spherical fusion category over $K$, it may be regarded as an element of the field $K$.

The {\em dimension} of a spherical fusion category ${\mathcal C}$ is
\[ \Delta_{\mathcal C} := \sum_{X\in S_{\mathcal C}} dim(X)^2 \]

\noindent where $S_{\mathcal C}$ is a set containing one object from each isomorphism class of simple objects in ${\mathcal C}$.
\end{definition}

The following proposition is easy to establish:

\begin{proposition}
    In any fusion category, the family of maps ${\mathsf d}_X:X\rightarrow X$ which multiplies each direct summand of $X$ by its quantum dimension is independent of the choices decompositions into direct summands and forms a natural transformation.
\end{proposition}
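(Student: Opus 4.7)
The strategy is to rewrite $\mathsf{d}_X$ as a sum of projections onto intrinsically defined isotypic components of $X$. Once this is done, independence of the decomposition becomes tautological, and naturality follows from the observation that every morphism preserves isotypic components.

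For each simple $S \in S_{\mathcal C}$, define the $S$-isotypic subobject $X_{[S]} \subseteq X$ as the sum of the images of all morphisms $S \to X$; this description is intrinsic to $X$ and makes no reference to any decomposition. Using the semi-simplicity built into the definition of multifusion category, together with the Schur-type fact that morphisms between non-isomorphic simples vanish, I would verify that $X = \bigoplus_{S \in S_{\mathcal C}} X_{[S]}$ and that any chosen decomposition $X \cong \bigoplus_j Y_j$ into simples refines this one: namely, $X_{[S]}$ is identified with $\bigoplus_{j : Y_j \cong S} Y_j$. Let $\pi_{[S]} : X \to X$ denote the associated canonical idempotent.

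Given any decomposition $X \cong \bigoplus_j Y_j$ with inclusions $\iota_j$ and projections $p_j$, collecting terms by isomorphism class of $Y_j$ yields
\[ \sum_j \dim(Y_j)\,\iota_j p_j \;=\; \sum_{S \in S_{\mathcal C}} \dim(S) \sum_{j : Y_j \cong S} \iota_j p_j \;=\; \sum_{S \in S_{\mathcal C}} \dim(S)\,\pi_{[S]}, \]
so $\mathsf{d}_X$ equals the manifestly canonical expression on the right, settling independence of the decomposition. For naturality, let $f : X \to Y$ be arbitrary. Post-composition with $f$ carries morphisms $S \to X$ to morphisms $S \to Y$, so $f(X_{[S]}) \subseteq Y_{[S]}$ for each simple $S$; equivalently, $f$ decomposes as $f = \bigoplus_S f_{[S]}$ with $f_{[S]} : X_{[S]} \to Y_{[S]}$. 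Then $\mathsf{d}_Y \circ f = \sum_S \dim(S)\,f_{[S]} = f \circ \mathsf{d}_X$.

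The only step requiring care is confirming the identification $X_{[S]} = \bigoplus_{j : Y_j \cong S} Y_j$ under an arbitrary decomposition. One inclusion is immediate, since each $Y_j$ isomorphic to $S$ is the image of some morphism $S \to X$. The reverse inclusion uses that any morphism $S \to X$ has zero projection onto every summand $Y_j \not\cong S$ by Schur, so its image lies in $\bigoplus_{j : Y_j \cong S} Y_j$. This is the only substantive verification; everything else is formal.
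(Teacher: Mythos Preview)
Your argument is correct. The paper does not actually prove this proposition: it simply asserts that it ``is easy to establish'' and moves on. Your isotypic-component approach is the natural way to fill in the details, and the Schur-type step you flag is indeed the only point requiring verification; the rest is, as you say, formal.
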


The components of ${\mathsf d}$ at the objects $B_{\mathcal C}:=\oplus_{X \in S({\mathcal C})} X$  will be used in our description of Petit's invariants.  It is easy to see that $tr({\mathsf d}_{B_{\mathcal C}} = \Delta_{\mathcal C}$.

In the case of a braided monoidal category every right dual
is also a left dual, in general in a non-canonical way
(cf. \cite{FY.cohere}). In symmetric monoidal
categories, we return to the familiar: right duals are canonically
left duals. For non-symmetric braided monoidal categories, the structure
will be canonical only in the presence of additional structure.  As we will be 
concerned only with the case of braided categories in which all objects admit such two-sided
duals, we make

\begin{definition}
A braided monoidal category $\mathcal C$ is {\em ribbon (or tortile)} 
if all objects
admit right duals, and it is equipped with a natural transformation
$\theta: 1_{\mathcal C} \Rightarrow 1_{\mathcal C}$, which satisfies

\[ \theta_{A\otimes B} = \sigma_{B,A}(\sigma_{A,B}(\theta_A \otimes \theta_B))
\]

\noindent and

\[ \theta_{A^\ast} = \theta_A^\ast \] 

Ribbon fusion categories are also called {\em premodular categories}.
\end{definition}


Any ribbon category is spherical when equipped with left duals whose structure maps are obtained by pre- (resp. post-)composing $epsilon$ (resp. $\eta$) by a component of $\sigma$ (resp. $\sigma^{-1}$).  For details of the canonicity of the correspondence between right
and left duals in the ribbon or tortile case see \cite{Y.FTTD}.
 
The importance of ribbon categories for knot theory and 3- and 4-manifold topology is provided by an extremely beautiful special case of the
 coherence theorem of Shum which characterized ribbon categories freely generated by a small category \cite{Shum}:

\begin{theorem}
The ribbon category freely generated by a single object
$X$ is monoidally equivalent to the category of framed tangles $\mathcal FT$.
\end{theorem}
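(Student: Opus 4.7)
The plan is to exhibit both an explicit ribbon-category structure on $\mathcal{FT}$ and a universal-property argument showing this structure is freely generated by a single strand, in three main steps.

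First I would verify that $\mathcal{FT}$ is itself a ribbon category. The generating object is a single oriented strand $\uparrow$; tensor product is horizontal juxtaposition of tangles; the monoidal unit is the empty tangle; associators and unitors are identities (or strictifiable); the dual of $\uparrow$ is the reverse-oriented strand $\downarrow$, with $\epsilon$ and $\eta$ realized by the oriented cup and cap; the braiding $\sigma_{\uparrow,\uparrow}$ is the standard positive crossing; and the twist $\theta_{\uparrow}$ is a single positive framing curl. Checking the ribbon axioms amounts to verifying that the usual zigzag (S-shape) identity holds, that the Reidemeister~II and~III relations give naturality of $\sigma$ and the hexagon (with orientation of strands handled via the induced duality), and that $\theta_{A\otimes B}=\sigma_{B,A}\sigma_{A,B}(\theta_A\otimes\theta_B)$ corresponds to the geometric fact that twisting a two-strand cable equals twisting each strand then interlinking them twice. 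All of this is pictorially routine.

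Next, let $\mathcal{R}(X)$ denote the ribbon category freely generated by a single object $X$. By the universal property inherent in the word ``freely generated,'' there is a unique, up to monoidal natural isomorphism, ribbon functor $F:\mathcal{R}(X)\to \mathcal{FT}$ sending $X$ to $\uparrow$. I would then establish essential surjectivity by a Morse-theoretic argument: choose a generic height function on any framed tangle $T$ so that the critical points separate $T$ into horizontal slabs, each of which contains only one elementary piece — a cup, cap, positive or negative crossing, or framing curl — and each such piece is manifestly the image under $F$ of a formal composite of the generating morphisms $X\otimes X^\ast\to I$, $I\to X^\ast\otimes X$, $\sigma_{X,X}^{\pm 1}$, $\theta_X$, and their duals.

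The heart of the proof, and the step that I expect to be genuinely hard, is faithfulness of $F$. This requires a presentation of $\mathcal{FT}$ by generators and relations that matches exactly the axioms of a ribbon category. One must show that two Morse decompositions of isotopic framed tangles are related by a finite sequence of (i) planar isotopies, (ii) the interchange law for distant critical points, (iii) oriented framed Reidemeister moves RII, RIII, and the framing-preserving RI$'$ (the twist-versus-curl identity), and (iv) the snake/zigzag identities relating cup and cap. The proof that these local moves generate framed-tangle isotopy is the content of Turaev's framed-link analogue of Reidemeister's theorem, and each listed move translates directly into one of the ribbon axioms: naturality of $\sigma$, the hexagon, the twist compatibility, and the duality triangles. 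Once this dictionary is set up, any two parallel morphisms in $\mathcal{R}(X)$ mapping to isotopic tangles in $\mathcal{FT}$ are connected by a sequence of moves each of which is an equality already imposed in $\mathcal{R}(X)$, so they are equal there as well. Combined with essential surjectivity, this gives the desired monoidal equivalence.
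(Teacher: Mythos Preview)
The paper does not actually prove this theorem: it is stated as a special case of Shum's coherence theorem and cited to \cite{Shum} without argument. Your outline is a faithful sketch of the standard approach (due to Shum, and in closely related forms to Freyd--Yetter and Turaev): equip $\mathcal{FT}$ with a ribbon structure, invoke freeness to obtain a comparison functor, and then show it is an equivalence by reducing framed-tangle isotopy to a complete set of local moves matching the ribbon axioms one-for-one.

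One terminological slip: what you call ``essential surjectivity'' in the Morse-theoretic step is really \emph{fullness} --- you are showing every morphism of $\mathcal{FT}$ lies in the image of $F$. Essential surjectivity on objects is trivial, since the objects of $\mathcal{FT}$ are finite sequences of oriented points and hence tensor words in $X$ and $X^\ast$. Your identification of faithfulness as the genuinely hard step is correct; that is precisely where the substance of Shum's proof lies, and your list of the required moves (framed Reidemeister, interchange, zigzag) is the right one.
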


The relationship between framed links and surgical presentations of 4-manifolds provided by the Kirby calculus \cite{K} then provides the starting point for the construction of 4-manifold invariants given by Broda \cite{Br97} and its generalization by Petit \cite{P2008}.  Shum's full coherence theorem also is the justification for the now commonplace use of knot-theoretic features in string-diagram calculations for maps in ribbon categories, which we will use without further comment.

Before recalling the construction of Petit \cite{P2008} in sufficient detail for our purposes, we prove a purely knot-theoretic lemma, which we feel provides a  wholely satisfactory explanation of why functorial knot invariants can be modified by introducing numerical coefficients (of quantum dimensions) on direct summands of the object to which a strand in the knot diagram is mapped has been done in \cite{RT, Br97, P2008}:

\begin{theorem} \label{jumprope} Suppose $L_0$ and $L_1$ are two equivalent links in ${\mathbb R}^3$,
and suppose, moreover, that each component of $L_0$ and $L_1$ has a 
unique global 
minimum.  Then there
is an ambient isotopy $H$ which preserves the component-wise global minima 
throughout the isotopy in the sense that if $S_t$ denotes the set of
component-wise
global minima on $L_t = H(L_0,t)$, then $S_t = H(S_0,t)$ for all 
$t \in [0,1]$. 
\end{theorem}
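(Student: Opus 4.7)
The plan is to start with any ambient isotopy $H'$ from $L_0$ to $L_1$ (which exists because the links are equivalent) and then modify it so that at every time the image of the original global minima is the set of global minima of the current link. Fix parametrizations $f_0^i:S^1\to C_i^0$ with $f_0^i(s_i^0)=p_i^0$ the global minimum of the $i$-th component of $L_0$, and set $f_t^i := H'_t\circ f_0^i$. By the isotopy-extension theorem, constructing the required $H$ is equivalent to producing a smooth path $\tilde{f}_t$ in $\mathrm{Emb}(\bigsqcup_i S^1,\mathbb{R}^3)$ from $f_0$ to a parametrization of $L_1$ such that $\tilde{f}_t(s_i^0)$ is the unique global minimum of the $i$-th component of $\tilde{f}_t(\bigsqcup_i S^1)$ for every $t$.

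A Thom-transversality argument applied to the family of height functions $h_t^i(s) := z(f_t^i(s))$ shows that after a $C^\infty$-small perturbation of $H'$ each $h_t^i$ is Morse with a unique global minimum for all but finitely many ``tie times'' $t_1<\cdots<t_k$, at each of which exactly two critical values of some $h_{t_j}^i$ coincide at the minimum height. Between tie times the parameter $\bar s_i(t)\in S^1$ of the global minimum depends smoothly on $t$; at $t_j$ it jumps from an ``incoming'' value $a_j$ to an ``outgoing'' value $b_j$.

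The heart of the argument is to eliminate these jumps. Near each tie time $t_j$ on the affected component $C_i$, choose a tubular neighborhood $N$ of $C_i(t_j)$ disjoint from the other components of $L_{t_j}$. In a short window $[t_j-\epsilon,t_j+\epsilon]$, compose $H'$ with a smooth family $G_t$ of ambient diffeomorphisms supported in $N$, equal to the identity at the window boundary, which ``tilts'' the arc of $C_i$ from $a_j$ to $b_j$ downward and back so that the global minimum of the perturbed height function slides continuously along this arc from a point near $a_j$ to a point near $b_j$. If $\epsilon$ and the tilt amplitude are taken small enough, no new ties are introduced, either between the two critical points on $C_i$ or between the global minima of different components (which, being generically unique, are bounded away in height). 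After this modification at every tie time, the parameter $\bar s_i(t)$ is continuous and piecewise smooth on all of $[0,1]$.

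Finally, choose for each $i$ a smooth family of diffeomorphisms $\phi_t^i:S^1\to S^1$ with $\phi_0^i=\mathrm{id}$ and $\phi_t^i(s_i^0)=\bar s_i(t)$, set $\tilde f_t := f_t\circ\phi_t$, and apply the isotopy-extension theorem to obtain the desired ambient isotopy $H$ satisfying $H_t(L_0)=L_t$ and $H(S_0,t)=S_t$. The main obstacle is the local tilt construction in the third paragraph: producing, inside a tubular neighborhood, a one-parameter family of compactly supported ambient deformations whose net effect is to slide the unique global minimum along a prescribed arc without creating a new tie, a point I would verify by writing down an explicit Morse-theoretic model in coordinates adapted to the height function.
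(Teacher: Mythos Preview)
Your Morse-theoretic framework (generic perturbation to isolate finitely many tie times, then reparametrization and isotopy extension) is a legitimate alternative to the paper's combinatorial Reidemeister-move approach, and steps 1, 2 and 5 are fine. The gap is exactly where you suspect it is, and it is a real one: the ``local tilt inside a tubular neighborhood'' in step 4 cannot in general slide the global minimum from $a_j$ to $b_j$.

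Here is the obstruction. At a tie time the two competing local minima $a_j,b_j$ are generically far apart on $S^1$; both arcs of $C_i$ joining them contain a local maximum and may rise to heights much greater than $h_{\min}$. Any curve $G_t\circ f_t^i$ with $G_t$ supported in the tube $N$ still lies in $N$, and since it is isotopic to the core it must meet every meridian disk of $N$, in particular the disk $D_A$ over $a_j$. Points of $D_A$ have height at most $h_{\min}+\delta$ (with $\delta$ the tube radius), so the global minimum of $G_t\circ f_t^i$ always has height $\le h_{\min}+\delta$. Thus the path $t\mapsto(\text{global-min point of }G_t\circ f_t^i)$ is confined to $N\cap\{z\le h_{\min}+\delta\}$. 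But when the arcs from $A=f^i(a_j)$ to $B=f^i(b_j)$ rise above $h_{\min}+2\delta$, this set has two components, one near $A$ and one near $B$, so no continuous path from $A$ to $B$ exists inside it. The tube radius $\delta$ is bounded by the distance from $C_i$ to itself and to the other components, so you cannot simply enlarge $N$ to absorb the arc. A ``small tilt'' certainly does not suffice, and no compactly-supported-in-$N$ diffeomorphism does either.

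The paper meets the same obstacle and resolves it with a \emph{global} move rather than a local one: after first normalizing so that all component-wise minima lie far below the rest of the link, the offending passage of an arc beneath its own minimum is replaced by swinging that arc \emph{over the top} of the entire link (the ``jump-rope'' trick, implemented by replacing a $\Delta$-move across a small disk by $\Delta$-moves across the complementary disk of a large polyhedral sphere). In your language, rather than pushing the connecting arc down inside a tube, you should lift it up and bring it around through the unbounded region above the link; this is exactly what makes the argument work, and it is not a modification supported in a tubular neighborhood of $C_i$.
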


\begin{proof}  
Begin by modifying each of $L_0$ and $L_1$ using 
an isotopy by $\Delta$-moves, in the
sense of Reidemeister \cite{Rei32}, to move the component-wise global minima 
below the plane $z = -1$ and leave the rest of each link
  lying between $z=0$ and $z=1$, except for edges incident 
with the global minima.
Plainly, this can be done without moving any points below the minima of their 
respective components.

It thus suffices to see that there is an isotopy between the modified links
$L_0^\prime$ and $L_1^\prime$ with the desired properties. Mark each of the 
global minima with a $\wedge$ on the curve.  Now view the links
in projection onto the $xy$-plane along a vector arbitrarily close to vertical 
for which the projection is a knot diagram with only transverse double points.

Now, a sequence of Reidemeister moves and isotopies of the projection (all
corresponding to $\Delta$-moves in ${\mathbb R}^3$) will turn the one diagram into
the other, including sending the marked minima to each other.

The problem is that some of these moves may correspond to moves in ${\mathbb R}^3$
which lower points below the global minimum of the component on which they lie.

Specifically, instances of the Reidemeister II (resp., III) move in which an
arc (resp., one or two arcs incident with a crossing) passes under another 
arc containing a minimum, and isotopies of the projection in which an
undercrossing arc moves from one side of a minimum to the other correspond
to portions of an isotopy which may violate the preservation of global minima.
When the arc passed under belongs to a different component from the minimum,
this may not violate the condition. If it does, it is easily remedied:
lower the minimum of the underpassing component well below the minimum
being passed under before doing the move.

In the case where the undercrossing arc belongs to the same component, we need
to do more work:
with the minimum set well below undercrossing arc, the move on 
the diagram corresponds to a
sequence of $\Delta$-moves, most of which simply lower the offending arc 
without 
changing the diagram,
one of which passes it below what should have been the global minimum, and
the rest of which bring the moved arc back up into the region where
the rest of the link is to lie at the end of the isotopy.

The ``before'' and ``after'' positions of the arc changed by the move form a loop
which we may assume lies between $z=0$ and $z=1$.  Lower the global min
of the component involved below all of the other global mins by 
an isotopy fixing the link above $z=0$.  (Doing this by $\Delta$-moves
without violating the global minimum preservation may require 
briefly introducing
a vertical segment projecting to the same point, but this is resolved 
immediately
by another $\Delta$-move.)

Now, complete the disk across which the errant move was made to a polyhedral
sphere, intersecting the initial and final links only in the segments changed
by the move, and such that the complementary disk forming the sphere 
lies entirely above the just-lowered
minimum.  (The new faces will slope down and slightly away from the boundary 
of the old disk to a level between the other minima and the lowered one
on the component being modified, then level off to form a 
``floor'' extending beyond the link in all horizontal directions, ``walls'' 
extending
above the link, and a ``ceiling'' completing the polyhedral sphere above
the link.)  The move can be replaced with a sequence of $\Delta$-moves
across the complementary disk of the sphere.

Thus, no moves which fail to preserve the global minima are necessary and the 
theorem holds. 
\end{proof}

A convenient way to visualize the last step in the proof is to imagine the component on which the offending move is to be made as a person holding a jump-rope (resp. a pair of crossed jump-ropes for the Reidemeister III case), and wanting to get it (resp. them) from in front of their feet to behind.  The offending Reidemeister move involves jumping, to bring the rope(s) under the feet, but the same position is obtained by swinging the ropes over one's head and bringing them down behind the feet.

As an aside, the theorem very much depends on the availability of isotopies which raise and lower the component-wise global minima.  If the minima were fixed, one could assume they are fixed on a ``sphere at infinity'' and one would be effectively dealing with isotopy classes of string links, which it is well-known are are not equivalent to isotopy classes of links.

\begin{corollary}
Any ambient isotopy invariant of (framed) links equipped with a distinguished base-point
on each component gives rise to an ambient isotopy invariant of links by choosing the
global minimum with respect to some coordinate direction to be the base-point.  Moreover, the same is true for isotopy invariants of colored (framed) links (that is links in which the components are divided into disjoint subsets designated by different colors).
\end{corollary}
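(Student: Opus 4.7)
The plan is to reduce everything directly to Theorem~\ref{jumprope}. Given a link $L$, first replace it by an ambient-isotopic representative in which each component has a unique global minimum in the fixed coordinate direction; this can always be arranged by an arbitrarily small generic perturbation of any given diagram. Mark each of these unique component-wise global minima as the distinguished base-point of its component, and define the proposed link invariant of $L$ to be the value of the given base-pointed invariant on the resulting based (framed) link.

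The only thing to check is well-definedness: if $L_0$ and $L_1$ are two such representatives of the same ambient isotopy class, both with unique component-wise global minima, then Theorem~\ref{jumprope} supplies an ambient isotopy $H$ from $L_0$ to $L_1$ which carries the set of minima to itself throughout. Since $H$ is continuous in $t$ and the minima are isolated points moving continuously, the base-point of each component of $L_0$ is carried to the base-point of the corresponding component of $L_1$. Thus the two based links are equivalent as based (framed) links, and the chosen base-pointed invariant assigns them the same value.

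For the colored version, the same argument applies because an ambient isotopy is a continuous family of homeomorphisms, hence it cannot interchange components; in particular the bijection between components of $L_0$ and $L_1$ induced by $H$ preserves the coloring, and the isotopy supplied by Theorem~\ref{jumprope} is automatically an equivalence of colored based links.

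The only potential obstacle is the perturbation step at the very beginning, where one must argue that any link can be represented with unique component-wise global minima; but this is a standard genericity argument (a generic height function on each component has a unique minimum), and once granted, Theorem~\ref{jumprope} carries out the rest of the work with no further effort.
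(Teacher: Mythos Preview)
Your argument is correct and is essentially what the paper intends: the corollary is stated without its own proof, and the paper follows it only with a remark that the choice of base-points or coordinate direction is immaterial (drag a chosen base-point down to become the global minimum, or rotate to align directions, then invoke Theorem~\ref{jumprope}). Your write-up makes explicit the genericity step and the well-definedness check that the paper leaves implicit; the only wording to tighten is that a generic height function on a circle has a unique \emph{global} minimum, not a unique minimum.
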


Note that the choice of base-points or coordinates is really irrelevant: given
base-points, one could first drag them down in the coordinate direction 
to make them
the global minima, or, given minima with respect to a different coordinate
direction, one could first rotate a the link so that the ``downward'' 
directions align;
then use the procedure of the proof to isotope one to the other preserving
the minima.

This lemma then gives a satisfactory explanation of how to introduce in a functorial way the coefficients on which the constructions of 3-manifold invariants of Reshetikhin and Turaev \cite{RT} and 4-manifold invariants of Broda \cite{Br97} and Petit \cite{P2008} depend:  the base point, regarded as an endomorphism of the object whose identity arrow is denoted as a descending arc, is mapped to the map we have denoted ${\mathsf d}$ which multiplies the direct summands of the coloring object by their quantum dimensions.





We now redescribe the construction of Petit's dichromatic invariants of 4-manifolds, making use of the foregoing, beginning by recalling the usual notions from the Kirby calculus \cite{K}.

\begin{definition}
A {\em Kirby diagram} is a blackboard framed diagram of an oriented framed link $L$, with a distinguished subset of its components forming a 0-framed unlink, $L_0$.
\end{definition}

Traditionally the components of $L_0$ are indicated by placing a large dot somewhere on the component.

From Kirby \cite{K} recall that such a diagram describes a 4-dimensional 0-1-2-handlebody, by regarding it as lying in the bounding $S^3$ of a 4-ball $B^4$ (the 0-handle), regarding the components of $L_0$ as locations from which 2-handles are to be removed from the interior of $B^4$, thereby effectively attaching a 1-handle which would be cancelled by the replacement of the removed 2-handle, and the components of $L\setminus L_0$ as giving instructions for the attaching of 2-handles.  Moreover, if the boundary of the resulting handlebody is diffeomorphic to the boundary of a 0-1-handlebody, there will be a unique way to complete the handlebody with 3-handles and a 4-handle to give a closed 4-manifold, and all closed connected smooth 4-manifolds arise in this way.

Also, as shown in \cite{K}, two such diagrams describe diffeomorphic handlebodies (closed 4-manifolds, when the boundary is appropriate) if and only if they are related by a sequence of (1) isotopies (which, using the blackboard framing convention are encoded by the framed Reidemeister moves, the usual first Reidemeister move being replaced with a move which does not remove the curl, but moves it to the other side of the strand, keeping the same crossing), (2) handle slide moves of one component over another, in which the component ``slid over'' the other is replaced with its band sum with a parallel copy of the the component over which it was slid, pushed off in the direction of the framing, with slides of components in $L_0$ over components not in $L_0$ being forbidden, as they do not correspond to a geometric handle-slide, and (3) stablilization moves:  addition or removal of a 0-framed component linked with a dotted component, or (for closed 4-manifolds) addition or removal of a 0-framed component, in either case, lying in a disk in the plane of the diagram which is disjoint from the rest of the diagram.

\begin{definition}
A {\em based Kirby diagram} is a Kirby diagram with a base point chosen on each component, lying on a strand oriented downward.
\end{definition}

In a category of tangles, a marked point on a strand may be taken to be an endomorphism of the object whose identity map an unmarked strand represents.  Doing this allows us to make the usual surgical constructions of 3- and 4-manifold invariants via the Kirby calculus functorial.

Consider the free ribbon category ${\mathcal K}$ generated by a category with two objects, ${\bf 1}$ and ${\bf 2}$, each of which have a copy of $({\mathbb N},+,0)$ as their set of endomorphisms, these being the only arrows in the generating category.  An easy application of Shum's \cite{Shum} full coherence theorem shows that this is equivalent to the category of framed tangles with marked points (with no limit on how many marked points occur or where they occur).

Based Kirby diagrams can be regarded as forming a particular sub-monoid (under separated union) of the commutative monoid of endomorphisms of the monoidal identity ${\mathcal K}$ -- namely that sub-monoid whose elements, when written as string diagrams, have exactly one marked point on each connected component, and with connected components colored ${\bf 1}$ forming a 0-framed unlink.

The effects of Petit's ``Kirby colors'' are then obtained by applying the functor induced by freeness, mapping ${\bf 1}$ (resp. ${\bf 2}$) to the object $B_{\mathcal C}:=\oplus_{X \in S({\mathcal C})} X$, (resp. $B_{\mathcal B}:=\oplus_{X \in S({\mathcal B})} X$), and the marked point ($1 \in {\mathbb N}$ as the generating endomorphism of the object), to ${\mathsf d}_{B_{\mathcal C}}$ (resp. ${\mathsf d}_{B_{\mathcal B}}$), where $S({\mathcal C})$ (resp. $S({\mathcal B})$) is a set of representative objects for the isomorphism classes of simple objects in ${\mathcal C}$ (resp. ${\mathcal B}$).

We call this functor ``the Kirby functor'', denote it by $\big< - \big>$, observe that as $\mathcal C$ is assumed to be a fusion category, its values on based Kirby diagrams are scalar multiples of the identity arrow on the monoidal identity, with which scalars we identify the images of based Kirby diagrams under the functor. 

By Shum's coherence theorem, the values of the Kirby functor are isotopy invariant.  For invariance under handle-slides Petit \cite{P2008} relies on results of Brugi\`{e}res \cite{Br00}, but a clearer proof of the necessary invariance properties can be found by applying Lemma 3.3 of B\"{a}renz and Barrett \cite{BB18} to the identity functor on ${\mathcal C}$ and the inclusion functor of ${\mathcal B}$ into ${\mathcal C}$, which makes obvious why the sliding property holds, even if the link-component slid over is knotted, non-trivally framed or linked with other components. 

We will also have call to use the generalization of the Lickorish encirclement property given by Brugi\`{e}res \cite{Br00}:

\begin{lemma} \label{encircle}
Let $\mathcal{C}$ be a ribbon fusion category with $\Delta_{\mathcal{C}} \in \mathbb{k}^{*}$. For every object $X$ of $\mathcal{C}$, we have:
\begin{eqnarray*}
    \lefteqn
(\;\epsilon_{B_{\mathcal C}^*} \otimes 1_X )( 1 \otimes {\mathsf d}_{B_{\mathcal C}} \otimes 1_X) (1_{B_{\mathcal C}^*} \otimes \sigma_{B_{\mathcal C},X}\sigma_{X,B_{\mathcal C}})(\eta_{B_{\mathcal C}} \otimes 1_X) \\
 =  \left\{ \begin{array}{cl} \Delta_{\mathcal{C}}\,\, 1_{X} & \textrm{if $X$ is a transparent object}, \\ 0 & \textrm{otherwise}. \end{array} \,\right.
\end{eqnarray*}

for every simple object $X$.\\
\end{lemma}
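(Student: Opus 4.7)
The plan is to reduce the expression to a sum over simple objects and then apply Schur's lemma, handling the transparent and non-transparent cases separately.

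First, I would decompose $B_{\mathcal C} = \bigoplus_{Y \in S_{\mathcal C}} Y$. Under this decomposition, the unit $\eta_{B_{\mathcal C}} : I \to B_{\mathcal C}^* \otimes B_{\mathcal C}$ and counit $\epsilon_{B_{\mathcal C}^*}$ split as sums over simples, and $\mathsf{d}_{B_{\mathcal C}}$ acts as multiplication by $\dim(Y)$ on the summand $Y$. Hence the whole expression in the statement becomes
\[ \sum_{Y \in S_{\mathcal C}} \dim(Y)\,\Phi_Y(X), \]
where $\Phi_Y(X) : X \to X$ is the partial trace on $Y$ of the squared braiding $\sigma_{Y,X}\sigma_{X,Y}$ acting on $Y \otimes X$ (picturewise, a $Y$-loop encircling the $X$-strand).

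Next, since $X$ is simple, Schur's lemma gives $\Phi_Y(X) = \lambda_{Y,X} \cdot 1_X$ for a scalar $\lambda_{Y,X} \in K$. When $X$ is transparent, $\sigma_{Y,X}\sigma_{X,Y} = 1_{X\otimes Y}$ for every $Y$, so the double braiding in the definition of $\Phi_Y(X)$ collapses and the diagram disconnects into an unknotted $Y$-loop disjoint from $X$; this evaluates to $\dim(Y) \cdot 1_X$ by definition of the quantum dimension. Summing with the $\dim(Y)$ weights then yields
\[ \sum_{Y \in S_{\mathcal C}} \dim(Y)^2 \cdot 1_X \;=\; \Delta_{\mathcal C} \cdot 1_X, \]
as required.

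The main obstacle is the non-transparent case, which is the content of Brugui\`eres' generalization of Lickorish encircling. Here I would argue via handle-slide invariance of the Kirby-colored encircler $\Omega = \sum_Y \dim(Y) Y$, a property that follows from Lemma 3.3 of B\"{a}renz--Barrett (the proof sketched earlier in the paper) applied to the inclusion $\mathcal{B} \hookrightarrow \mathcal{C}$ with $\mathcal{B} = \mathcal{C}$. Concretely, by hypothesis there exists a simple $Z$ with $\sigma_{X,Z}\sigma_{Z,X} \neq 1_{X \otimes Z}$. One computes the endomorphism of $Z \otimes X$ obtained by encircling the $X$-strand with $\Omega$ in two ways: directly, which yields $f \otimes 1_Z$ where $f$ is the endomorphism we wish to show vanishes; and after sliding the encircler $\Omega$ along the braiding so that it encircles both $Z$ and $X$, using sliding invariance. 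Equating the two expressions produces a relation of the form $(1_Z \otimes f) = (\sigma_{X,Z}\sigma_{Z,X})(1_Z \otimes f)$ on the $X$-isotypic component, so $f$ lies in the kernel of $\sigma_{X,Z}\sigma_{Z,X} - 1$ composed with nontrivial data; since $X$ is simple and $\sigma_{X,Z}\sigma_{Z,X}$ is not the identity on any simple summand of $Z \otimes X$ containing $X$, this forces the scalar $f$ to be $0$. An alternative and shorter route is simply to invoke Brugui\`eres \cite{Br00} for this vanishing, as the authors of the paper explicitly do.
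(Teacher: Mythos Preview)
The paper does not actually prove this lemma; it merely states it as ``the generalization of the Lickorish encirclement property given by Brugi\`{e}res \cite{Br00}'' and moves on. Your proposal therefore goes well beyond what the paper provides, and in fact your closing remark---that one can ``simply invoke Brugui\`{e}res \cite{Br00} for this vanishing, as the authors of the paper explicitly do''---\emph{is} the paper's entire treatment.

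Your actual argument is essentially correct and is the standard one. The decomposition over simples and the transparent case are clean. For the non-transparent case, the sliding relation you extract,
\[
(1_Z \otimes f) \;=\; (\sigma_{X,Z}\sigma_{Z,X})\,(1_Z \otimes f),
\]
is right: sliding the $Z$-strand over the $\Omega$-loop leaves the value unchanged while producing an extra double braiding of $Z$ with $X$. But your final sentence overcomplicates the conclusion. You do not need to talk about ``the $X$-isotypic component'' or about ``simple summands of $Z\otimes X$ containing $X$.'' Since $f = c_X\,1_X$ is a scalar, $1_Z\otimes f = c_X\,1_{Z\otimes X}$, and the relation reads $c_X\bigl(\sigma_{X,Z}\sigma_{Z,X} - 1_{Z\otimes X}\bigr)=0$ as morphisms; the bracket is nonzero by choice of $Z$, so $c_X=0$ immediately. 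Cleaning up that last step would make the proof complete and self-contained, which is strictly more than the paper offers.
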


The map in Lemma \ref{encircle} is given in string diagram notation by

\[\includegraphics[width=1in]{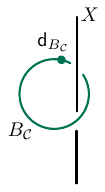}\]

Restated in our favored terminology, the main result of Petit \cite{P2008} regarding 4-manifolds is then:

\begin{theorem}
Let $\mathcal{B} \subset \mathcal{C}$ be ribbon fusion categories such that their dimensions  and the quantity
\[\Delta_{\mathcal{C}, \mathcal{B}}^{\prime \prime} = \sum_{X \in S_{\mathcal{B}} \cap T_{\mathcal{C}}} \textrm{dim} (X)^2 \]
are invertible, $W$ a closed, compact, connected, and oriented 4-manifold, and $L$ a based Kirby diagram which represents $W$, then the following scalar is independent of the choice of the based Kirby diagram, and thus an invariant of the 4-manifold $W$,
    \[ I_+ (W) = \frac{\big<L\big>}{(\Delta_{\mathcal{B}})^{b_0(L)} \, (\Delta_{\mathcal{C}} \, \Delta_{\mathcal{C}, \mathcal{B}}'')^{b_+(L)}}, \]
    
\noindent where $\displaystyle \Delta_{\mathcal{C}, \mathcal{B}}^ {\prime \prime} = \sum_{X \in S_{\mathcal{B}} \cap T_{\mathcal{C}}} \textrm{dim} (X)^2$, and
$b_\epsilon (L)$ be the number eigenvalues of the linking matrix of $L$ with the sign $\epsilon \in \{0, -, +\}.$
\end{theorem}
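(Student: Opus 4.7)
The plan is to verify that $I_+(W)$ is invariant under each move in the Kirby equivalence on based Kirby diagrams representing the same 4-manifold: (i) framed isotopies; (ii) relocation of base points along components; (iii) handle slides (respecting the prohibition on sliding $L_0$-components over $L\setminus L_0$-components); and (iv) stabilizations. Isotopy invariance of $\langle L\rangle$ is immediate from Shum's coherence theorem, and the linking matrix is manifestly isotopy-invariant. Base-point invariance is delivered by Theorem \ref{jumprope} together with its corollary: since the Kirby functor uniformly sends marked points on $L_0$- and $L\setminus L_0$-strands to ${\mathsf d}_{B_{\mathcal C}}$ and ${\mathsf d}_{B_{\mathcal B}}$ respectively, their exact placement does not affect $\langle L\rangle$, and the linking matrix is insensitive to base points.

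For handle slide invariance I would follow the authors' suggestion of applying B\"{a}renz--Barrett's Lemma 3.3 to the identity endofunctor of $\mathcal C$ (controlling slides of $L\setminus L_0$ over $L\setminus L_0$ and slides within $L_0$) and to the inclusion $\mathcal B\hookrightarrow\mathcal C$ (controlling slides of $L\setminus L_0$ over $L_0$). In either case the parallel Kirby-colored copy produced by the band sum is absorbed by the Kirby color on the strand slid over, so $\langle L\rangle$ is unchanged. The linking matrix meanwhile transforms by a symmetric congruence (a single row operation together with its column-transpose), preserving Sylvester inertia and hence the triple $(b_0,b_+,b_-)$; the denominator is preserved.

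Stabilization comes in two flavors, each handled by Lemma \ref{encircle}. For the $1$-$2$ cancelling-pair move---adding a dotted unknot together with a $0$-framed non-dotted unknot Hopf-linked to it, disjoint from the rest---the encirclement of the $B_{\mathcal B}$-colored non-dotted strand by the $B_{\mathcal C}$-colored dotted meridian singles out those simple summands of $B_{\mathcal B}$ lying in $T_{\mathcal C}$; combined with the ${\mathsf d}$-insertions on both components and the two trace closures, this multiplies $\langle L\rangle$ by exactly $\Delta_{\mathcal C}\,\Delta''_{\mathcal C,\mathcal B}$. Regarding the linking matrix as the full $(n+m)\times(n+m)$ matrix with $0$ diagonal entries for dotted components, the new Hopf sub-block $\bigl(\begin{smallmatrix}0 & 1\\ 1 & 0\end{smallmatrix}\bigr)$ has eigenvalues $\pm 1$, so both $b_+$ and $b_-$ rise by $1$ while $b_0$ is unchanged; the denominator therefore gains precisely $(\Delta_{\mathcal C}\Delta''_{\mathcal C,\mathcal B})^{1}$, matching the numerator change. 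For the closed-case subcase (a lone $0$-framed non-dotted unknot disjoint from everything), $\langle L\rangle$ picks up a disjoint factor $tr({\mathsf d}_{B_{\mathcal B}})=\Delta_{\mathcal B}$; the linking matrix acquires a single zero eigenvalue ($b_0\mapsto b_0+1$); the denominator gains exactly $\Delta_{\mathcal B}$.

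The main technical obstacle is the handle slide step, where one must justify the cancellation of the band-summed Kirby-colored parallel copy for a strand $K'$ of arbitrary knot type, framing, and linking with the rest of $L$. A direct skein-theoretic argument from Lemma \ref{encircle} alone requires a delicate case analysis; the B\"{a}renz--Barrett formulation handles all cases uniformly by recasting the cancellation as naturality of the Kirby color, and also makes transparent why the restriction against sliding $L_0$-components over $L\setminus L_0$-components is exactly what is needed so that the encirclement identity is being applied in the correct category ($\mathcal B$ or $\mathcal C$).
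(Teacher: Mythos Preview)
Your argument is correct and follows the same route as the paper---isotopy via Shum's theorem, handle slides via B\"arenz--Barrett's Lemma 3.3, and the two stabilizations via the encirclement lemma---while supplying details (Sylvester inertia of the linking matrix under handle slides, the explicit $\pm 1$ eigenvalues of the Hopf block) that the paper leaves implicit.

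One minor bookkeeping slip worth fixing: the identity functor on $\mathcal C$ produces the Kirby color $B_{\mathcal C}$, which is what sits on the \emph{dotted} components $L_0$, while the inclusion $\mathcal B\hookrightarrow\mathcal C$ produces $B_{\mathcal B}$, the color on $L\setminus L_0$; the sliding lemma in each case governs slides \emph{over} the strand carrying that Kirby color, so it is the identity that handles slides over $L_0$ (both $1$-over-$1$ and $2$-over-$1$) and the inclusion that handles $2$-over-$2$ slides---you have these interchanged. Relatedly, the B\"arenz--Barrett lemma for the inclusion actually allows any $\mathcal C$-colored strand to be slid over a $B_{\mathcal B}$-strand, so the prohibition on sliding $L_0$ over $L\setminus L_0$ is purely a geometric constraint of the Kirby calculus and is not reflected by any algebraic failure here; your closing remark on this point overstates the correspondence.
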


To prove that $I_{+}$ is an invariant of closed and oriented 4-manifolds, we need to prove that $I_{+}$ is invariant under the Kirby moves. By construction of Kirby functor, we know that $\big<L\big>$ is invariant under isotopies of framed link and handle slides. In addition, the linking matrix of $L_0$ does not change under isotopies. Therefore, $I_+$ is invariant under isotopies.

Separated union with a 0-framed component plainly increases the number multiplicity of zero as an eigenvalue of the linking matrix by 1 and multiplies 
$\big<L\big>$ by $\Delta_{\mathcal B}$, while separated union with a Hopf link with one component 0-framed and one dotted component increases the number of positive eigenvalues by one and, applying Lemma \ref{encircle} to evaluate the functor on the Hopf link,  multiplies $\big<L\big>$ by $\Delta_{\mathcal{C}} \, \Delta_{\mathcal{C}, \mathcal{B}}''$, so plainly $I_+(W)$ is invariant under the stabilization moves.

As an aside we note that the powers on the normalization factors in the denominator could have been chosen differently.  In particular, instead of $b_+(L)$, the rank of the linking matrix or the number of dotted components could have been used, while $b_0(L)$ could be replace with the number of 2-handle attaching curves minus the number of dotted components.  In fact, in the present work in which after introducing surfaces, we obtain only invariants of surfaces in (0,1,2)-handlebodies, the first factor in the denominator could be omitted entirely. Different normalizations may be preferable in various contexts, but we will not examine the question here and simply use Petit's choice.

\section{Generalized Broda-Petit invariants for 4D (0,1,2)-handlebody/surface pairs}

In \cite{HKM19} Hughes at al. described an extension of the Kirby calculus which provided an encoding of smooth 4-manifolds with an embedded surface.  This encoding is provided by a more complicated diagrammatics similar to that of Kirby \cite{K}:  besides the framed link encoding the 2-handle attachments and the dotted 0-framed unlink encoding the 1-handles as locations to bore out a 2-handle, their diagrams have another (0-framed, since we are using blackboard framings) unknot, between whose components there are bands.  

This ``banded unknot'' encodes a surface by regarding the unlink as lying in a 3-sphere just inside the bounding 3-sphere of the 0-handle of the 4-manifold, thought of as the standard 4-ball, and having disks hanging inward toward the origin, as the 0-handles of the surface, the bands should be thought of as rising slightly outward, and are the 1-handles of the surface.  The 2-handles are then disks attached to the circles formed by the closure of the symmetric difference between union of the band boundaries and the unlink.  The requirements that these disks exist is a condition on the banded unlink, which in particular implies that its components have linking number zero with those of the dotted unlink encoding the 1-handles.

Moreover, Hughes et al. show that two presentations in this form of 4-manifolds with embedded surfaces give diffeomorphic pairs if and only if they are related by a sequence of moves of the following types:

\begin{description}
\item[$\bullet$ isotopy] sequences of framed Reidemeister moves;
\item[$\bullet$ handle slides] encoded as replacing the sliding component with its band-sum with the component slide over, with the same restrictions as in the Kirby calculus \cite{K};
\item[$\bullet$ Kirby stablization]  separated union with a zero-framed 2-handle attaching curve, or with a Hopf link formed of a 0-framed 2-handle attaching curve and a dotted unknot, or undoing such a move;
\item[$\bullet$ cap]
\[\includegraphics[width=1.2in]{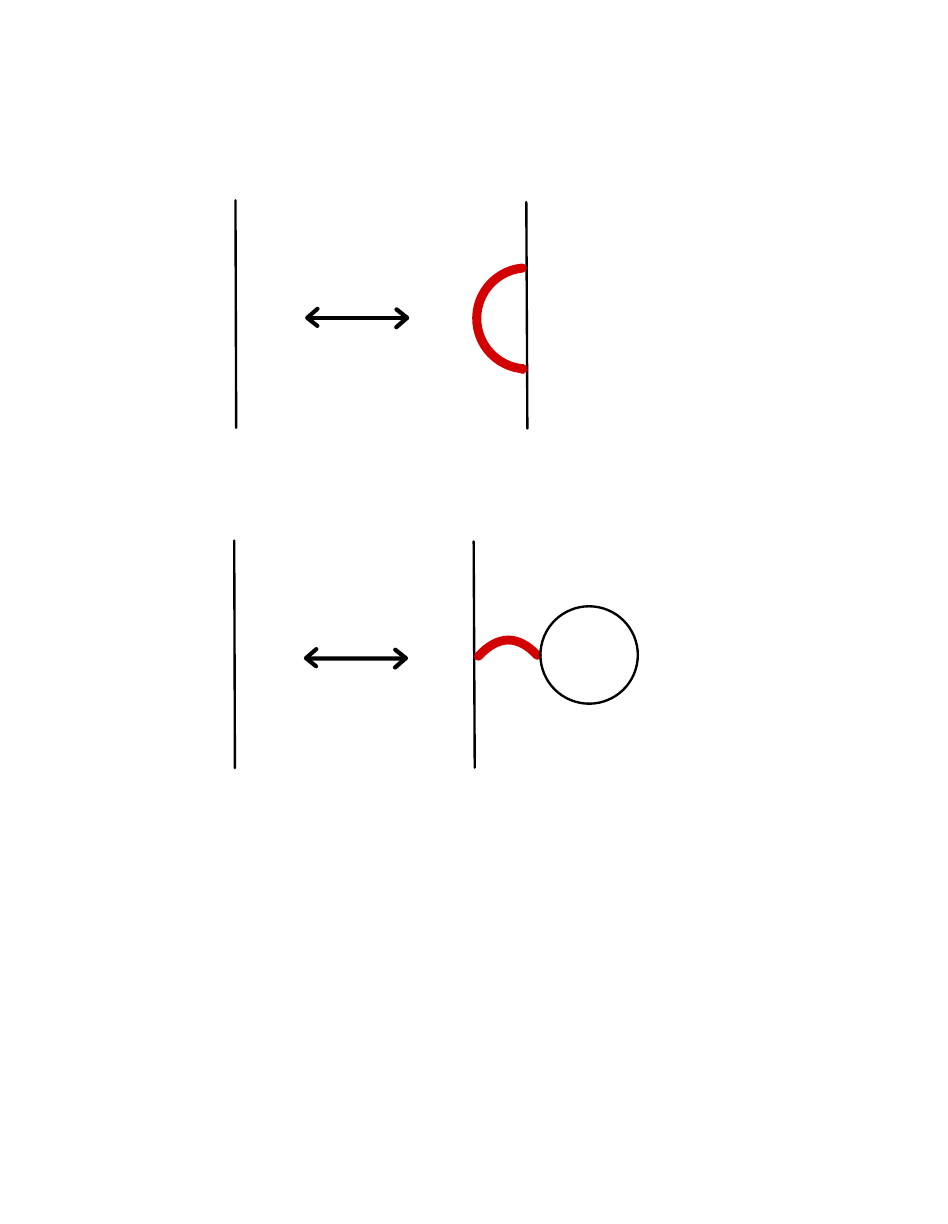}\]
\item[$\bullet$ cup]
  \[\includegraphics[width=1.2in]{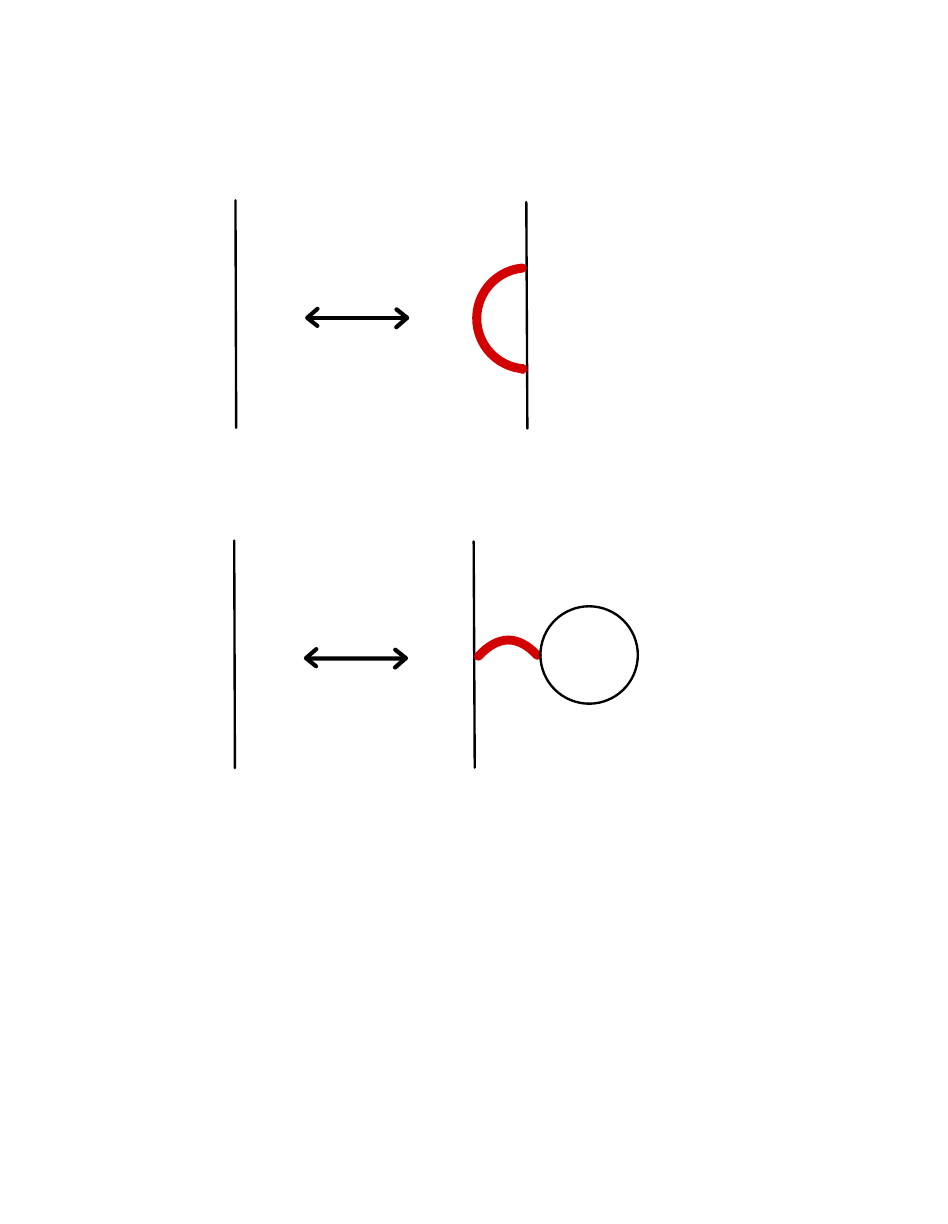}\]
\item[$\bullet$ slides of surface] encoded as band sum moves of a band over any component of the Kirby diagram, or of an unlink component over a dotted component of the Kirby diagram

\item[$\bullet$ band slide]
\[\includegraphics[width=1.2in]{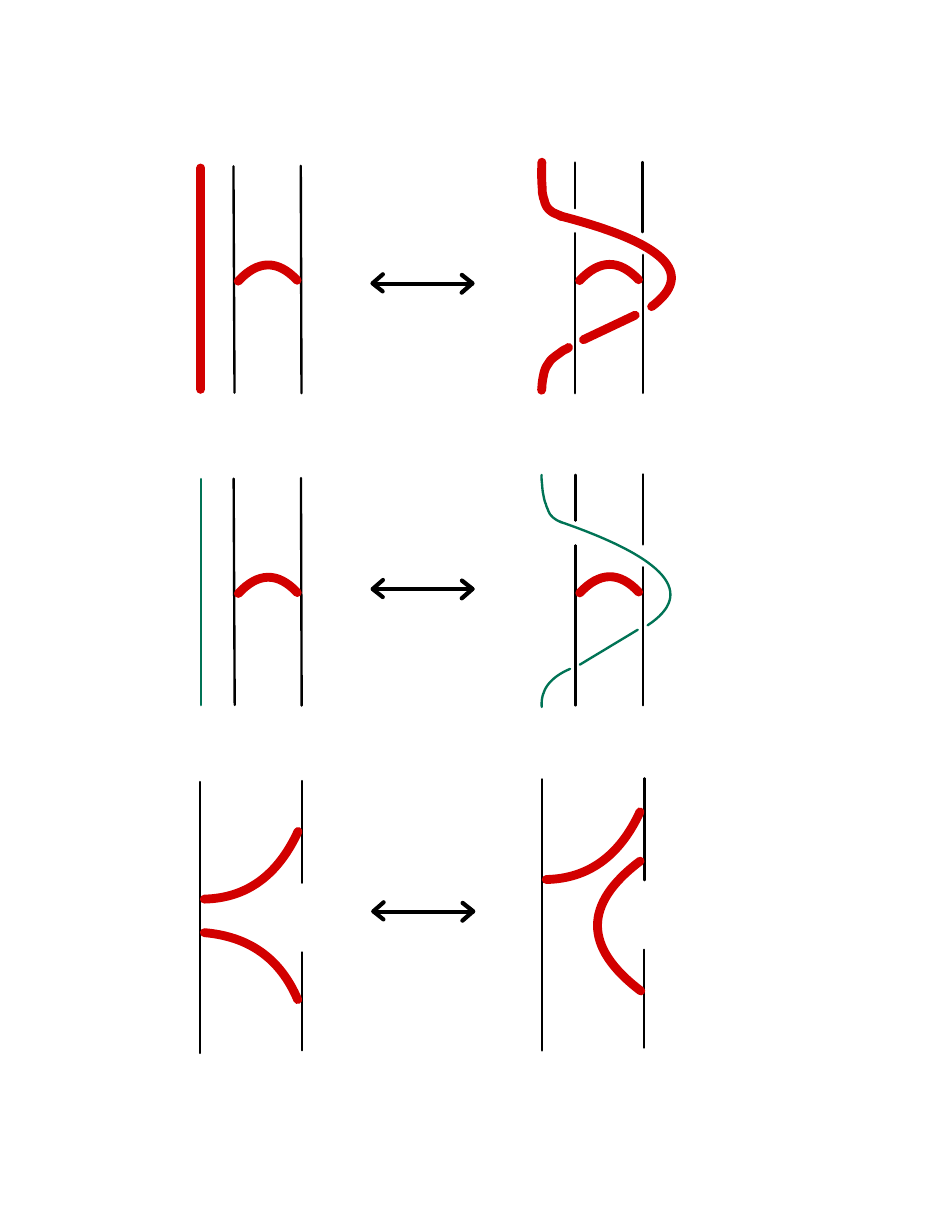}\]
\item[$\bullet$ band swim]
\[\includegraphics[width=1.5in]{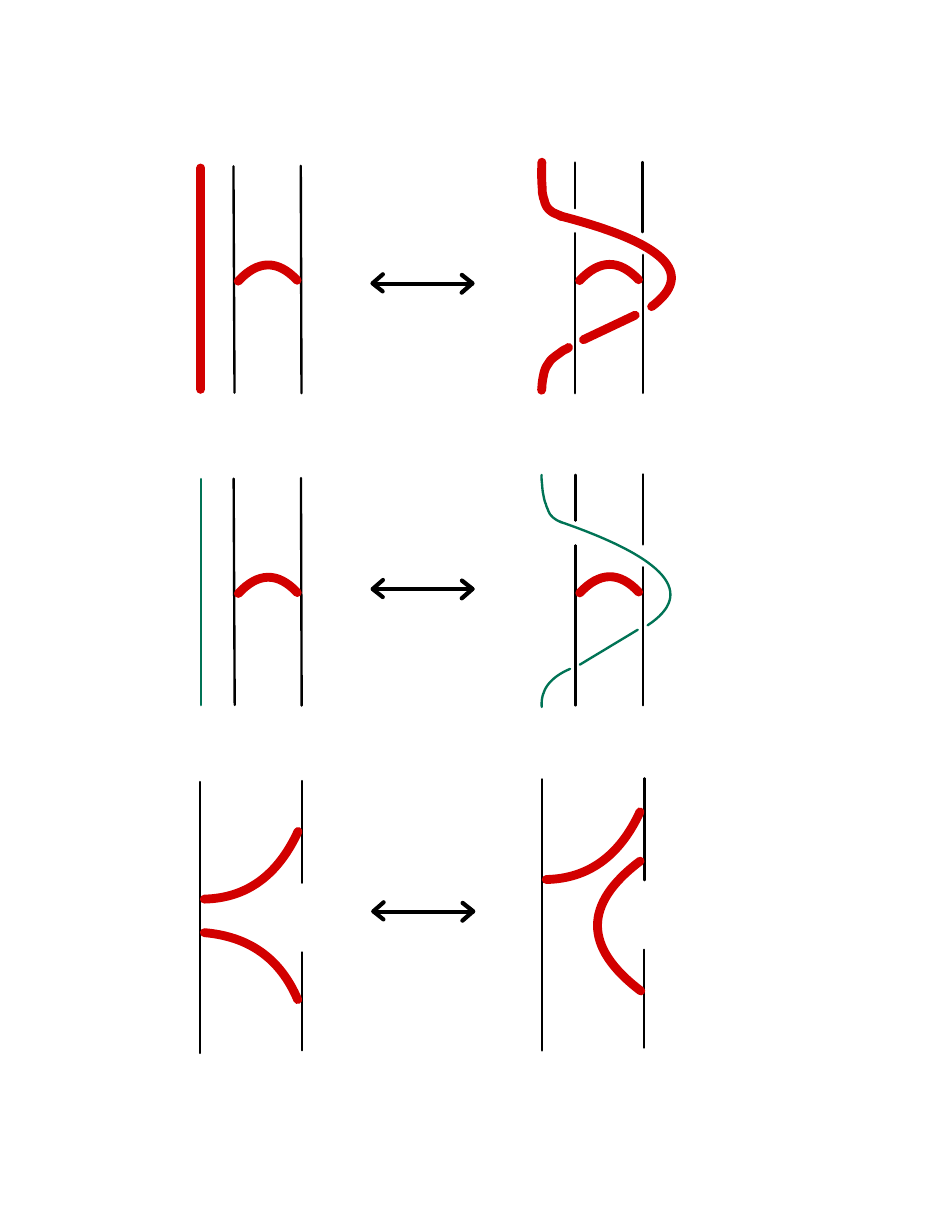}\]
\item[$\bullet$ band/2-handle swim]
\[\includegraphics[width=1.5in]{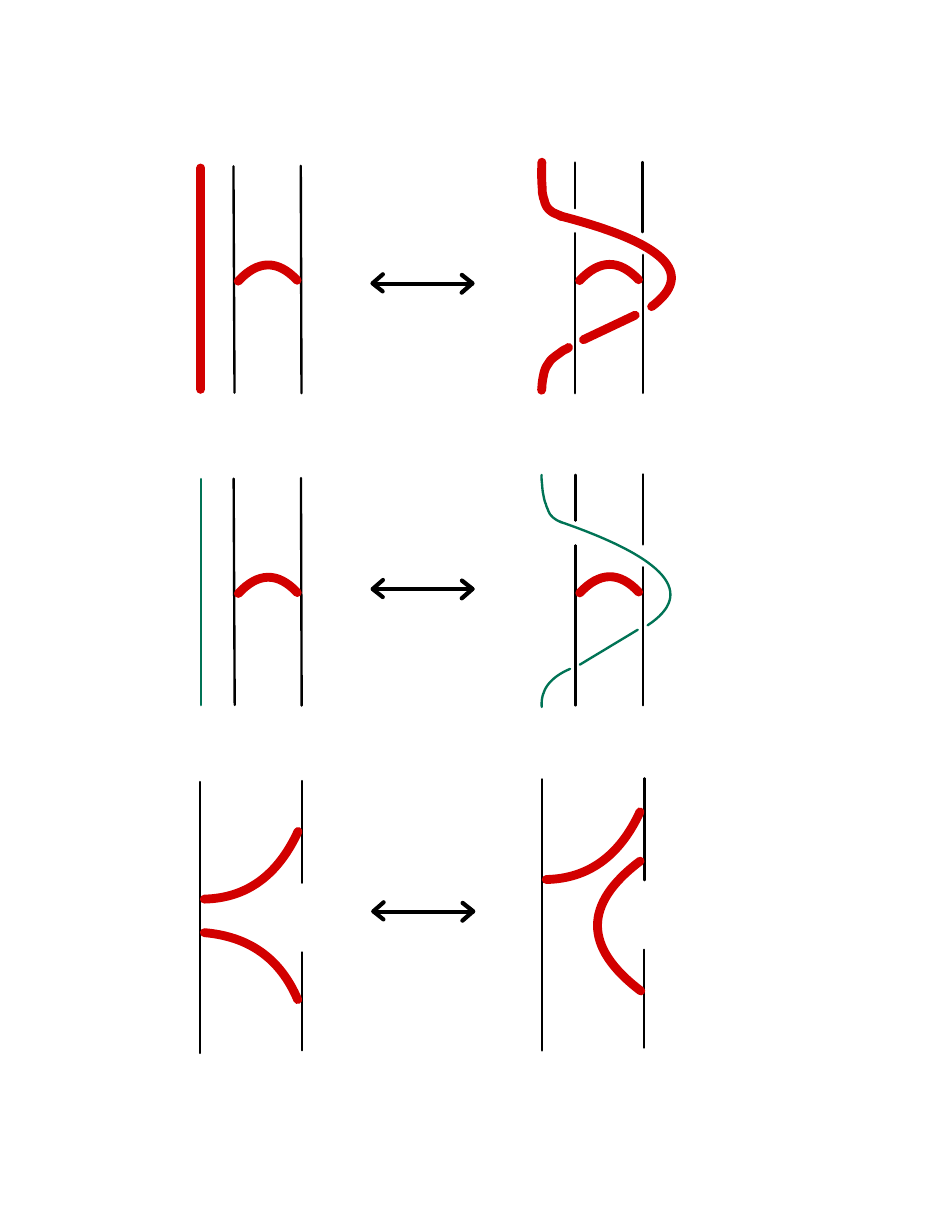}.\]

\end{description}

In the moves shown, rather than described, above, the black arcs are part of the unknot encoding the surface, the thick red arcs are bands in the surface encoding, and the green arc is part of a 2-handle attaching curve.

The reader may note that we have drawn the last three moves in a slightly different position than Hughes et al. do in \cite{HKM21}.  Doing this makes their translation into an algebraic structure in a ribbon category more obvious.

The key to translating the moves into algebraic form is the band slide move:  the geometric justification for it includes an intermediate step in which the upper foot of the lower band in our diagram above in attached to the middle of the upper band.  Readers familiar with the development of algebraic approaches to topological field theory will recognize that if the band (when running vertically) is thought of as the identity arrow on an object in a ribbon category, that object must have the structure of a symmetric Frobenius algebra with the multiplication (or comultiplication, depending on the exact geometry) given by the intermediate step in the band slide move (cf. \cite{L06}):

\begin{definition}
  In any monoidal category, {\em a Frobenius algebra (object)} is an object $F$, equipped with an associative, unital multiplication $\mu:F\otimes F\rightarrow F$ (which we will write as the null infix when applied to elements of underlying sets or vector spaces, when the fusion category admits a faithful underlying functor), with unit $e:I\rightarrow F$ and a coassociative, counital comultiplication $\Delta: F \rightarrow F\otimes F $, with counit map $\epsilon:F \rightarrow I$, and moreover satisfying 
   \[ (1\otimes \mu)(\Delta \otimes 1) = \Delta \mu = (\mu \otimes 1)(1 \otimes \Delta) \]
   \noindent where we have suppressed mention of the associator by MacLane's coherence theorem.
\end{definition}

And, more than that, the state of the band slide move shown on the right, together with this intermediate state requires that the object whose identity arrow is a vertical part of the unlink which is part of the surface encoding must be a module over this Frobenius algebra.  Here a little care is needed:  should it be a left module, a right module or a bimodule?  To have a satisfactory answer, we will require that our embedded surfaces be oriented.  This is accomplished by orienting the unlink encoding the surface, and requiring that this orientation extend compatibly to the curve bounding the upper disks in the description given above.  The surface can then be given the orientation such that the orientation on the 0-handles has the given orientation on the unlink as boundary orientation.

Thus, if we perform isotopies so that bands have no half-twists, a band with feet on vertical sections of the unknot must run from an upward oriented section to a downward oriented section.   Moreover, since the unknot has no thickness, by performing an isotopy that lifts part of the band over (or pushes part of the band under) an unknot component, we may assume that the band always approaches downward (resp. upward) oriented parts of the unknot from the left (resp. right).

From this and the band slide move with its intermediate step included, we see that a band foot, with the band approaching the unknot from above (resp. below), is a string diagram representation of an action (resp. coaction) of a Frobenius algebra on a module, this being a left module if the bit of unknot is oriented downward, and a right module if it is oriented upward:

\begin{definition}
Let $M$ be a left module over $F$ with a left action,
\[ \mu: F \otimes M \rightarrow M \,\,\, \textrm{by} \,\,\, a \otimes m \mapsto a m \]
satisfying

\begin{itemize}
        \item $ a ( b m ) = ( a b ) m $
        \item $(1_F) m = m $
\end{itemize}

Right modules are defined similarly.

We can depict it using red lines as follows for $F$, and black for $M$; for $ a , b \in F , m \in M ,$
\\

\hspace{3mm} \begin{tikzpicture}[thick,scale=0.5]
	\node[red, circle, fill, inner sep=0pt, outer sep=0pt] (1) at (0,0){};
	\node[circle, fill, inner sep=0pt, outer sep=0pt] (2) at (2,0){};
	\node[red, circle, fill, inner sep=0pt, outer sep=0pt] (3) at (0,-1){};
	\node[circle, fill, inner sep=0pt, outer sep=0pt] (4) at (2,-1){};
	\node[circle, fill, inner sep=0pt, outer sep=0pt] (5) at (1,-2){};
	\node[circle, fill, inner sep=0pt, outer sep=0pt] (6) at (1,-3){};
	\node[inner sep=0pt, outer sep=0pt] (7) at (0,0.4){$a$};
	\node[inner sep=0pt, outer sep=0pt] (8) at (2,0.4){$m$};
	\node[inner sep=0pt, outer sep=0pt] (8) at (1,-3.4){$a m$};
	\node[inner sep=0pt, outer sep=0pt] (9) at (9,-1.3){$\mu: F \otimes M \rightarrow M \,\,\,\,\, \textrm{by} \,\,\,\,\, a \otimes m \mapsto a m$};
	
	\draw [ultra thick, red, rounded corners] (1) -- (3);
	\draw [ultra thin, rounded corners] (2) -- (4);
	\draw [ultra thick, red, rounded corners] (3) -- (5);
	\draw [ultra thin, rounded corners] (4) -- (5);
	\draw [ultra thin, rounded corners] (5) -- (6);
\end{tikzpicture}
\\
\\

\hspace{3mm} \begin{tabular}{ll}
\begin{tikzpicture}[thick,scale=0.5]
	\node[red, circle, fill, inner sep=0pt, outer sep=0pt] (1) at (0,0){};
	\node[red, circle, fill, inner sep=0pt, outer sep=0pt] (2) at (1,0){};
	\node[circle, fill, inner sep=0pt, outer sep=0pt] (3) at (3,0){};
	\node[red, circle, fill, inner sep=0pt, outer sep=0pt] (4) at (1,-1){};
	\node[circle, fill, inner sep=0pt, outer sep=0pt] (5) at (3,-1){};
	\node[circle, fill, inner sep=0pt, outer sep=0pt] (6) at (2,-2){};
	\node[red, circle, fill, inner sep=0pt, outer sep=0pt] (7) at (0,-3){};
	\node[circle, fill, inner sep=0pt, outer sep=0pt] (8) at (2,-3){};
	\node[circle, fill, inner sep=0pt, outer sep=0pt] (9) at (1,-4){};
	\node[circle, fill, inner sep=0pt, outer sep=0pt] (10) at (1,-5){};
	\node[inner sep=0pt, outer sep=0pt] (11) at (0,0.4){$a$};
	\node[inner sep=0pt, outer sep=0pt] (12) at (1,0.4){$b$};
	\node[inner sep=0pt, outer sep=0pt] (13) at (3,0.4){$m$};
	\node[inner sep=0pt, outer sep=0pt] (14) at (1,-5.4){$a b m$};
	\node[inner sep=0pt, outer sep=0pt] (15) at (5,-2.5){$=$};
	
	\draw [ultra thick, red, rounded corners] (1) -- (7);
	\draw [ultra thick, red, rounded corners] (2) -- (4);
	\draw [ultra thin, rounded corners] (3) -- (5);
	\draw [ultra thick, red, rounded corners] (4) -- (6);
	\draw [ultra thin, rounded corners] (5) -- (6);
	\draw [ultra thin, rounded corners] (6) -- (8);
	\draw [ultra thick, red, rounded corners] (7) -- (9);
	\draw [ultra thin, rounded corners] (8) -- (9);
	\draw [ultra thin, rounded corners] (9) -- (10);
\end{tikzpicture}
 & \hspace{4mm} 
\begin{tikzpicture}[thick,scale=0.5]
	\node[red, circle, fill, inner sep=0pt, outer sep=0pt] (1) at (0,0){};
	\node[red, circle, fill, inner sep=0pt, outer sep=0pt] (2) at (2,0){};
	\node[circle, fill, inner sep=0pt, outer sep=0pt] (3) at (3,0){};
	\node[red, circle, fill, inner sep=0pt, outer sep=0pt] (4) at (0,-1){};
	\node[red, circle, fill, inner sep=0pt, outer sep=0pt] (5) at (2,-1){};
	\node[red, circle, fill, inner sep=0pt, outer sep=0pt] (6) at (1,-2){};
	\node[red, circle, fill, inner sep=0pt, outer sep=0pt] (7) at (1,-3){};
	\node[circle, fill, inner sep=0pt, outer sep=0pt] (8) at (3,-3){};
	\node[circle, fill, inner sep=0pt, outer sep=0pt] (9) at (2,-4){};
	\node[circle, fill, inner sep=0pt, outer sep=0pt] (10) at (2,-5){};
	\node[inner sep=0pt, outer sep=0pt] (11) at (0,0.4){$a$};
	\node[inner sep=0pt, outer sep=0pt] (12) at (2,0.4){$b$};
	\node[inner sep=0pt, outer sep=0pt] (13) at (3,0.4){$m$};
	\node[inner sep=0pt, outer sep=0pt] (14) at (2,-5.4){$a b m$};
	
	\draw [ultra thick, red, rounded corners] (1) -- (4);
	\draw [ultra thick, red, rounded corners] (2) -- (5);
	\draw [ultra thin, rounded corners] (3) -- (8);
	\draw [ultra thick, red, rounded corners] (4) -- (6);
	\draw [ultra thick, red, rounded corners] (5) -- (6);
	\draw [ultra thick, red, rounded corners] (6) -- (7);
	\draw [ultra thick, red, rounded corners] (7) -- (9);
	\draw [ultra thin, rounded corners] (8) -- (9);
	\draw [ultra thin, rounded corners] (9) -- (10);
\end{tikzpicture}
\\
\\
 \end{tabular}

\hspace{3mm} \begin{tabular}{ll}
\begin{tikzpicture}[thick,scale=0.5]
	\node[red, circle, fill, inner sep=0pt, outer sep=0pt] (1) at (0,0){};
	\node[circle, fill, inner sep=0pt, outer sep=0pt] (2) at (2,1){};
	\node[red, circle, fill, inner sep=0pt, outer sep=0pt] (3) at (0,-1){};
	\node[circle, fill, inner sep=0pt, outer sep=0pt] (4) at (2,-1){};
	\node[circle, fill, inner sep=0pt, outer sep=0pt] (5) at (1,-2){};
	\node[circle, fill, inner sep=0pt, outer sep=0pt] (6) at (1,-3){};
	\node[inner sep=0pt, outer sep=0pt] (7) at (1,-3.7){$(1_F) m$};
	\node[inner sep=0pt, outer sep=0pt] (8) at (4,-1.5){$=$};
	
	\draw [ultra thick, red, rounded corners] (1) -- (3);
	\draw [ultra thin, rounded corners] (2) -- (4);
	\draw [ultra thick, red, rounded corners] (3) -- (5);
	\draw [ultra thin, rounded corners] (4) -- (5);
	\draw [ultra thin, rounded corners] (5) -- (6);
\end{tikzpicture}

 & \hspace{4mm} 
\begin{tikzpicture}[thick,scale=0.5]
	\node[circle, fill, inner sep=0pt, outer sep=0pt] (1) at (0,1){};
	\node[circle, fill, inner sep=0pt, outer sep=0pt] (2) at (0,-3){};
	\node[inner sep=0pt, outer sep=0pt] (7) at (0,-3.4){$m$};
	
	\draw [ultra thin, rounded corners] (1) -- (2);
\end{tikzpicture}
\\
\\
 \end{tabular}

 The corresponding equations for right modules have string diagram representations which are mirror images of these in a vertical line of reflection.

A module $M$ over $F$ always admits a comodule structure on the same side, the coaction being give by precomposing the action by the monoidal product of the copairing with the identity map on the module.  With this coaction the equations in string diagrams below always hold:

\begin{center}
\begin{tabular}{lll}
\begin{tikzpicture}[thick,scale=0.5]
	\node[red, circle, fill, inner sep=0pt, outer sep=0pt] (1) at (0,0){};
	\node[circle, fill, inner sep=0pt, outer sep=0pt] (2) at (3,0){};
	\node[red, circle, fill, inner sep=0pt, outer sep=0pt] (3) at (0,-1){};
	\node[red, circle, fill, inner sep=0pt, outer sep=0pt] (4) at (-1,-2){};
	\node[red, circle, fill, inner sep=0pt, outer sep=0pt] (5) at (1,-2){};
	\node[red, circle, fill, inner sep=0pt, outer sep=0pt] (6) at (1,-3){};
	\node[circle, fill, inner sep=0pt, outer sep=0pt] (7) at (3,-3){};
	\node[circle, fill, inner sep=0pt, outer sep=0pt] (8) at (2,-4){};
	\node[red, circle, fill, inner sep=0pt, outer sep=0pt] (9) at (-1,-5){};
	\node[circle, fill, inner sep=0pt, outer sep=0pt] (10) at (2,-5){};
	\node[inner sep=0pt, outer sep=0pt] (11) at (5,-2.5){$=$};
	
	\draw [ultra thick, red, rounded corners] (1) -- (3);
	\draw [ultra thick, red, rounded corners] (3) -- (4);
	\draw [ultra thick, red, rounded corners] (3) -- (5);
	\draw [ultra thick, red, rounded corners] (5) -- (6);
	\draw [ultra thick, red, rounded corners] (6) -- (8);
	\draw [ultra thick, red, rounded corners] (4) -- (9);
	\draw [ultra thin, rounded corners] (2) -- (7);
	\draw [ultra thin, rounded corners] (7) -- (8);
	\draw [ultra thin, rounded corners] (8) -- (10);
\end{tikzpicture}
&
\begin{tikzpicture}[thick,scale=0.5]
	\node[red, circle, fill, inner sep=0pt, outer sep=0pt] (1) at (0,0){};
	\node[circle, fill, inner sep=0pt, outer sep=0pt] (2) at (2,0){};
	\node[circle, fill, inner sep=0pt, outer sep=0pt] (3) at (1,-1){};
	\node[circle, fill, inner sep=0pt, outer sep=0pt] (4) at (1,-4){};
	\node[red, circle, fill, inner sep=0pt, outer sep=0pt] (5) at (0,-5){};
	\node[circle, fill, inner sep=0pt, outer sep=0pt] (6) at (2,-5){};
	\node[inner sep=0pt, outer sep=0pt] (7) at (3,-2.5){$=$};
	
	\draw [ultra thick, red, rounded corners] (1) -- (3);
	\draw [ultra thin, rounded corners] (2) -- (3);
	\draw [ultra thin, rounded corners] (3) -- (4);
	\draw [ultra thick, red, rounded corners] (4) -- (5);
	\draw [ultra thin, rounded corners] (4) -- (6);
\end{tikzpicture}
&
\hspace{6mm}\begin{tikzpicture}[thick,scale=0.5]
	\node[red, circle, fill, inner sep=0pt, outer sep=0pt] (1) at (-1,0){};
	\node[circle, fill, inner sep=0pt, outer sep=0pt] (2) at (2,0){};
	\node[red, circle, fill, inner sep=0pt, outer sep=0pt] (3) at (2,-1){};
	\node[red, circle, fill, inner sep=0pt, outer sep=0pt] (4) at (1,-2){};
	\node[red, circle, fill, inner sep=0pt, outer sep=0pt] (5) at (3,-2){};
	\node[red, circle, fill, inner sep=0pt, outer sep=0pt] (6) at (-1,-3){};
	\node[circle, fill, inner sep=0pt, outer sep=0pt] (7) at (1,-3){};
	\node[circle, fill, inner sep=0pt, outer sep=0pt] (8) at (0,-4){};
	\node[red, circle, fill, inner sep=0pt, outer sep=0pt] (9) at (0,-5){};
	\node[circle, fill, inner sep=0pt, outer sep=0pt] (10) at (3,-5){};
	
	\draw [ultra thick, red, rounded corners] (1) -- (6);
	\draw [ultra thin, rounded corners] (2) -- (3);
	\draw [ultra thick, red, rounded corners] (3) -- (4);
	\draw [ultra thin, rounded corners] (3) -- (5);
	\draw [ultra thick, red, rounded corners] (4) -- (7);
	\draw [ultra thick, red, rounded corners] (6) -- (8);
	\draw [ultra thick, red, rounded corners] (7) -- (8);
	\draw [ultra thick, red, rounded corners] (8) -- (9);
	\draw [ultra thin, rounded corners] (5) -- (10);
\end{tikzpicture}
\\
\\
 \end{tabular}
\end{center}

\end{definition}

When it is recalled that duality extends to a contravariant functor and that Frobenius algebra objects are canonically self-dual, for $F$ a Frobenius algebra object and $M$ any left module, we always have the following maps:
\\

\begin{center}
\begin{tabular}{ll}

\begin{tikzpicture}[thick,scale=0.5]
	\node[red, circle, fill, inner sep=0pt, outer sep=0pt] (1) at (0,0){};
	\node[red, circle, fill, inner sep=0pt, outer sep=0pt] (2) at (2,0){};
	\node[red, circle, fill, inner sep=0pt, outer sep=0pt] (3) at (0,-1){};
	\node[red, circle, fill, inner sep=0pt, outer sep=0pt] (4) at (2,-1){};
	\node[red, circle, fill, inner sep=0pt, outer sep=0pt] (5) at (1,-2){};
	\node[red, circle, fill, inner sep=0pt, outer sep=0pt] (6) at (1,-3){};
	\node[inner sep=0pt, outer sep=0pt] (7) at (6,-1.3){$\mu: F \otimes F \rightarrow F$};

	\draw [ultra thick, red, rounded corners] (1) -- (3);
	\draw [ultra thick, red, rounded corners] (2) -- (4);
	\draw [ultra thick, red, rounded corners] (3) -- (5);
	\draw [ultra thick, red, rounded corners] (4) -- (5);
	\draw [ultra thick, red, rounded corners] (5) -- (6);
\end{tikzpicture}
&
\hspace{1.7cm}
\begin{tikzpicture}[thick,scale=0.5]
	\node[red, circle, fill, inner sep=0pt, outer sep=0pt] (1) at (1,0){};
	\node[red, circle, fill, inner sep=0pt, outer sep=0pt] (2) at (1,-1){};
	\node[red, circle, fill, inner sep=0pt, outer sep=0pt] (3) at (0,-2){};
	\node[red, circle, fill, inner sep=0pt, outer sep=0pt] (4) at (2,-2){};
	\node[red, circle, fill, inner sep=0pt, outer sep=0pt] (5) at (0,-3){};
	\node[red, circle, fill, inner sep=0pt, outer sep=0pt] (6) at (2,-3){};
	\node[inner sep=0pt, outer sep=0pt] (7) at (6,-1.3){$\Delta: F \rightarrow F \otimes F$};
	
	\draw [ultra thick, red, rounded corners] (1) -- (2);
	\draw [ultra thick, red, rounded corners] (2) -- (3);
	\draw [ultra thick, red, rounded corners] (2) -- (4);
	\draw [ultra thick, red, rounded corners] (3) -- (5);
	\draw [ultra thick, red, rounded corners] (4) -- (6);
\end{tikzpicture}
\\
\\
\begin{tikzpicture}[thick,scale=0.5]
	\node[red, circle, fill, inner sep=0pt, outer sep=0pt] (1) at (0,0){};
	\node[circle, fill, inner sep=0pt, outer sep=0pt] (2) at (2,0){};
	\node[red, circle, fill, inner sep=0pt, outer sep=0pt] (3) at (0,-1){};
	\node[circle, fill, inner sep=0pt, outer sep=0pt] (4) at (2,-1){};
	\node[circle, fill, inner sep=0pt, outer sep=0pt] (5) at (1,-2){};
	\node[circle, fill, inner sep=0pt, outer sep=0pt] (6) at (1,-3){};
	\node[inner sep=0pt, outer sep=0pt] (7) at (6,-1.3){$m: F \otimes M \rightarrow M$};
	
	\draw [ultra thick, red, rounded corners] (1) -- (3);
	\draw [ultra thin, rounded corners] (2) -- (4);
	\draw [ultra thick, red, rounded corners] (3) -- (5);
	\draw [ultra thin, rounded corners] (4) -- (5);
	\draw [ultra thin, rounded corners] (5) -- (6);
\end{tikzpicture}
&
\hspace{1.7cm}
\begin{tikzpicture}[thick,scale=0.5]
	\node[circle, fill, inner sep=0pt, outer sep=0pt] (1) at (1,0){};
	\node[circle, fill, inner sep=0pt, outer sep=0pt] (2) at (1,-1){};
	\node[red, circle, fill, inner sep=0pt, outer sep=0pt] (3) at (0,-2){};
	\node[circle, fill, inner sep=0pt, outer sep=0pt] (4) at (2,-2){};
	\node[red, circle, fill, inner sep=0pt, outer sep=0pt] (5) at (0,-3){};
	\node[circle, fill, inner sep=0pt, outer sep=0pt] (6) at (2,-3){};
	\node[inner sep=0pt, outer sep=0pt] (7) at (6,-1.3){$\delta: M \rightarrow F \otimes M$};
	
	\draw [ultra thin, rounded corners] (1) -- (2);
	\draw [ultra thick, red, rounded corners] (2) -- (3);
	\draw [ultra thin, rounded corners] (2) -- (4);
	\draw [ultra thick, red, rounded corners] (3) -- (5);
	\draw [ultra thin, rounded corners] (4) -- (6);
\end{tikzpicture}
\\
\\
\begin{tikzpicture}[thick,scale=0.5]
	\node[circle, fill, inner sep=0pt, outer sep=0pt] (1) at (0,0){};
	\node[red, circle, fill, inner sep=0pt, outer sep=0pt] (2) at (2,0){};
	\node[circle, fill, inner sep=0pt, outer sep=0pt] (3) at (0,-1){};
	\node[red, circle, fill, inner sep=0pt, outer sep=0pt] (4) at (2,-1){};
	\node[circle, fill, inner sep=0pt, outer sep=0pt] (5) at (1,-2){};
	\node[circle, fill, inner sep=0pt, outer sep=0pt] (6) at (1,-3){};
	\node[inner sep=0pt, outer sep=0pt] (7) at (6,-1.3){\hspace{7mm} $\delta^* = m: M^* \otimes F \rightarrow M^*$};
	
	\draw [ultra thin, rounded corners] (1) -- (3);
	\draw [ultra thick, red, rounded corners] (2) -- (4);
	\draw [ultra thin, rounded corners] (3) -- (5);
	\draw [ultra thick, red, rounded corners] (4) -- (5);
	\draw [ultra thin, rounded corners] (5) -- (6);
\end{tikzpicture}
&
\hspace{1.7cm}
\begin{tikzpicture}[thick,scale=0.5]
	\node[circle, fill, inner sep=0pt, outer sep=0pt] (1) at (1,0){};
	\node[circle, fill, inner sep=0pt, outer sep=0pt] (2) at (1,-1){};
	\node[circle, fill, inner sep=0pt, outer sep=0pt] (3) at (0,-2){};
	\node[red, circle, fill, inner sep=0pt, outer sep=0pt] (4) at (2,-2){};
	\node[circle, fill, inner sep=0pt, outer sep=0pt] (5) at (0,-3){};
	\node[red, circle, fill, inner sep=0pt, outer sep=0pt] (6) at (2,-3){};
	\node[inner sep=0pt, outer sep=0pt] (7) at (6,-1.3){\hspace{7mm} $m^* = \delta: M^* \rightarrow M^* \otimes F$};
	
	\draw [ultra thin, rounded corners] (1) -- (2);
	\draw [ultra thin, rounded corners] (2) -- (3);
	\draw [ultra thick, red, rounded corners] (2) -- (4);
	\draw [ultra thin, rounded corners] (3) -- (5);
	\draw [ultra thick, red, rounded corners] (4) -- (6);
\end{tikzpicture}.
\end{tabular}
\end{center}

Notice that we have drawn the red arcs representing the identity maps on the Frobenius algebra object with greater thickness than the black arcs representing the module, in keeping with our argument that formally the bands of Hughes et al. \cite{HKM19, HKM21} (when running vertically) should be (identity arrows on) a Frobenius algebra object in a ribbon category.

The foregoing discussion, together with the arguments from Petit \cite{P2008} has almost established our main theorem.  It remains only to discuss the categorical conditions which give invariance under the cup, cap, surface slide and swimming moves, which we explicitly state as part of the theorem:

\begin{theorem}
If ${\mathcal C} \supset {\mathcal B}$ is nested pair of ribbon fusion categories, such that $\Delta_{\mathcal C}$, $\Delta_{\mathcal B}$ and $\Delta_{{\mathcal C},{\mathcal B}}^{\prime \prime}$ are non-zero, and $(F, \mu, e, \Delta, \epsilon)$ is a symmetric Frobenius algebra object in $\mathcal B$, $(M, m)$ a left $F$-module in $\mathcal C$, with induced coaction $\delta := 1_F\otimes m(\Delta(e))\otimes 1_M$, moreover satisfying

\begin{itemize}
    \item[cap] there exists a scalar $k$ such that 
    \[ m(\delta) = k\cdot 1_M \]
    
    \item[cup] there exits a scalar $\kappa$ such that 
    \[ 1_M\otimes ev_M(m\otimes m \otimes 1_{M^*}(1_M\otimes coev_M)) = \kappa\cdot 1_M \]
    
    \item[swim] the image of 
    \[(m \otimes m)(1 \otimes \Delta(1_F) \otimes 1):M^* \otimes  M \rightarrow M^*\otimes M \] 
    
    is ${\mathcal B}$-transparent. 
    
    \end{itemize}
    
    Then the evaluation resulting from coloring a based Kirby diagram with banded unlink by
    
    \begin{itemize}
        \item coloring every two-handle attaching curve with $B_{\mathcal B}$, and inserting the map ${\mathsf d}_{B_{\mathcal B}}$ at the base point ,
        \item coloring every one-handle attaching curve with $B_{\mathcal C}$, and inserting the map ${\mathsf d}_{B_{\mathcal C}}$ at the base point ,
        \item coloring each band with $F$
        \item coloring each unlink component with $M$ on the downward sections and $M^*$ on the upward sections
        
    \end{itemize}
    
    \noindent and interpreting the meetings of bands and unlink components as the action, when normalized by dividing by $\Delta_{\mathcal B}^{b_0} (\Delta_{\mathcal{C}} \, \Delta_{\mathcal{C}, \mathcal{B}}'')^{b_+} k^s \kappa^\omega$, where $b_0$ is the multiplicity of 0 as an eigenvalue in the linking matrix of the framed link describing the 1- and 2-handles, $b_+$ is the number of positive eigenvalues in the linking matrix of the framed link describing the 1- and 2-handles, $s$ is the number of self-bands (bands with both feet on the same components of the unknot) and $\omega$ is the number of other-bands (bands with feet on two different components of the unknot),
    is an invariant of the surface-(0,1,2)-handlebody pair presented by the diagram.
\end{theorem}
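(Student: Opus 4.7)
The plan is to reduce invariance to checking each of the moves in the HKM calculus listed above, with the normalization absorbing whatever local scalar factors arise. First I would observe that framed Reidemeister moves are handled by Shum's coherence theorem applied to the free ribbon category whose generating objects now include $F$ and $M$ in addition to $B_{\mathcal{B}}$ and $B_{\mathcal{C}}$, and that independence from the choice of base-point on each component is handled by Theorem \ref{jumprope}. Invariance under the two Kirby stabilizations and under handle slides of 2-handle and 1-handle (dotted) attaching curves is inherited directly from Petit's theorem as reviewed in the previous section: these moves interact only with strands colored by $B_{\mathcal{B}}$ or $B_{\mathcal{C}}$, so the argument via Lemma 3.3 of B\"{a}renz and Barrett \cite{BB18} applies verbatim and the normalization $\Delta_{\mathcal{B}}^{b_0}(\Delta_{\mathcal{C}}\Delta_{\mathcal{C},\mathcal{B}}^{\prime\prime})^{b_+}$ absorbs the resulting scalars.

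Next I would dispose of the remaining moves involving the surface encoding. The band slide move is precisely the string-diagrammatic form of the module axiom $a(bm)=(ab)m$, and so holds by the hypothesis that $(M,m)$ is an $F$-module. Surface slides, in which a band is band-summed with a Kirby-diagram component, or an unlink component with a dotted component, follow from an adaptation of the handle-slide argument: the Frobenius multiplication on $F$ plays the same role as the comultiplication of the Kirby color, with the required encirclement identity reducing to one already established for objects of $\mathcal{B}\subset\mathcal{C}$. The cap and cup moves are calibrated exactly by the two scalar hypotheses: the cap move locally applies $m\circ\delta$ to an unlink strand, producing $k\cdot 1_M$ and shifting the count $s$ by $\pm 1$, absorbed by $k^s$; the cup move similarly produces $\kappa$ and shifts $\omega$ by $\pm 1$, absorbed by $\kappa^\omega$.

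The final and most delicate step is invariance under the band swim and band/2-handle swim moves. Geometrically these moves pull a band past another band or past a 2-handle attaching curve. Algebraically, the difference between the two sides of each swim is a loop, computed up to isotopy as the image of $(m\otimes m)(1\otimes\Delta(1_F)\otimes 1):M^*\otimes M\to M^*\otimes M$, which becomes linked with a strand colored by $F$ (band swim) or by $B_{\mathcal{B}}$ (band/2-handle swim). In each case the strand being encircled is a $\mathcal{B}$-object, so the $\mathcal{B}$-transparency of this image, combined with a sliding-off calculation in the style of Lemma \ref{encircle}, shows that the linking contributes trivially.

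The step I expect to be the main obstacle is the bookkeeping for the swim moves: verifying that, however the band is twisted or re-oriented and regardless of which $\mathcal{B}$- or $\mathcal{C}$-colored strands it crosses along the way, the loop to be pulled across is always identifiable as the specific morphism appearing in the hypothesis, and that the action-versus-coaction dichotomy at band feet (governed by the orientation conventions established in the discussion preceding the theorem) produces no extraneous twist factors. Modulo this routine but notationally intensive check, the three hypothesis conditions combine with the Broda-Petit machinery to yield the claimed invariant.
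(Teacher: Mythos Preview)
Your proposal is correct and follows essentially the same route as the paper: isotopy via Shum, handle and surface slides via the B\"{a}renz--Barrett sliding lemma, band slide via the Frobenius/module axioms, cap and cup via the two scalar hypotheses and their matching normalization factors, and swim moves via the ${\mathcal B}$-transparency hypothesis. Two small sharpenings: for surface slides the Frobenius multiplication on $F$ plays no role---invariance is entirely a consequence of the sliding property of the Kirby colors $B_{\mathcal B}$ and $B_{\mathcal C}$, with $F\in{\mathcal B}$ and $M\in{\mathcal C}$ merely being objects slid over them; and for the swim moves the paper's argument is more direct than your ``loop becomes linked'' picture---one simply inserts the idempotent (projection onto the image followed by inclusion) just below where the copairing legs act on $M^*$ and $M$, which leaves the diagram unchanged, and then ${\mathcal B}$-transparency of that image trivializes the braiding with the $F$- or $B_{\mathcal B}$-colored strand, so no encirclement-style calculation is needed.
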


\begin{proof}
Invariance under the band slide move was discussed in our explanation of why a Frobenius algebra object and a module over it are the appropriate coloring data for the bands and unlink components.  

Invariance under handle slide moves follows from the lemma of B\"{a}renz and Barrett \cite{BB18} discussed previous section, which extends easily to give the surface slide moves when the band is an object in $\mathcal B$.

The conditions cap and cup, together with the specified normalizations plainly render the normalized evaluation invariant under the cap and cup moves.

Finally, invariance under the band and 2-handle swimming moves follows from the $\mathcal B$-transparency of the image of the ``action in the middle of the copairing'', as the map given by left-hand side of the band swim and band/2-handle swim moves, regarded as string diagrams are unchanged by inserting a projection from $M^*\otimes M$ onto the image, followed by the inclusion of the image just below where the legs of the copairing act on the dual module and module.
\end{proof}

Notice that we have not claimed that the normalized evaluation is an invariant of the surface-closed-4-manifold pair presented by the diagram, though for consistency with Petit \cite{P2008} we have retained the normalization factor which makes the quantity invariant under the removal of a 2-handle cancellable by the addition of a 3-handle.  

This is because the evaluation is not invariant under the removal of 3-handle-cancellable 2-handles from which some part of the surface cannot be removed by ambient isotopy. We illustrate this lack of invariance in one of our examples in the next section.

\section{Examples from group theoretic fusion categories}

Having established our main theorem, it is incumbent upon us to supply at least some class of examples of the necessary algebraic data.  In this section will discuss one such class of examples and do some sample calculations of the resulting invariants. There are some unsatisfactory aspects to the class of initial data we explicitly describe, which we will discuss in our concluding section on directions for future research.

We begin with a standard result from the theory of fusion categories:

\begin{proposition}
For any finite abelian group $G$, together with a bicharacter $\beta:G\times G \rightarrow \mathbf{k}^\times$, and a 3-cocycle $a: G \times G \times G \rightarrow \mathbf{k}^\times$  category $\mathbf{Vec}_G$ of $G$-graded vector spaces over $\mathbf{k}$ is a ribbon category, with duals of homogeneous objects given by the dual vector space in the inverse degree, associator given by multiplication by $a$ on triples of homogeneous objects, braiding given on homogeneous objects of degrees $g$ and $\gamma$ by $\beta(g,\gamma)\cdot tw$, where $tw$ is the usual symmetry for tensor product of vector spaces, and the ribbon twist given on objects homogeneous of degree $g$ by multiplication by $\beta(g,g)$.  
\end{proposition}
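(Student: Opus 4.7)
The plan is to verify each axiom of a ribbon category by decomposing into homogeneous summands and reducing each equation to a scalar identity in $\mathbf{k}^\times$ coming from the cocycle $a$ or the bicharacter $\beta$. Because every object of $\mathbf{Vec}_G$ is a direct sum of grade-homogeneous lines and every structural map is $\mathbf{k}$-linear, it suffices to check axioms on homogeneous pieces: the underlying vector-space identities are the usual ones, and $a$, $\beta$ only contribute multiplicative scalars depending on the grades.

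First I would set up the monoidal structure: $(V\otimes W)_h = \bigoplus_{g\gamma=h} V_g\otimes W_\gamma$, unit $\mathbf{k}$ in degree $e$, and associator on homogeneous triples of degrees $g,\gamma,\delta$ equal to $a(g,\gamma,\delta)$ times the vector-space associator. The pentagon collapses to the 3-cocycle identity for $a$, and the triangle holds after the standard normalization. Right duals $(V^*)_g := (V_{g^{-1}})^*$ with the vector-space evaluation and coevaluation then satisfy the triangle identities, since the scalar twist by $a$ enters only through normalized factors. Next I would check the two hexagons for $\sigma_{V,W}(v_g\otimes w_\gamma) = \beta(g,\gamma)\,w_\gamma\otimes v_g$: on homogeneous triples these produce scalar identities coupling $\beta$ to $a$, and the bicharacter property $\beta(gg',\gamma) = \beta(g,\gamma)\beta(g',\gamma)$ together with its partner in the second slot combines with the 3-cocycle condition to yield both hexagons. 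The main obstacle is precisely this hexagon step: for a truly arbitrary pair $(a,\beta)$ the hexagons can fail, so one reads the statement under the standing Eilenberg--MacLane abelian 3-cocycle compatibility, or passes to a cohomologically rescaled representative in which $a\equiv 1$ and only bicharacter bookkeeping remains.

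Finally I would verify the ribbon conditions for $\theta_V|_{V_g} = \beta(g,g)\cdot 1_{V_g}$. The identity $\theta_{A\otimes B} = \sigma_{B,A}\sigma_{A,B}(\theta_A\otimes\theta_B)$ on homogeneous pieces of degrees $g,h$ reads $\beta(gh,gh) = \beta(g,h)\beta(h,g)\beta(g,g)\beta(h,h)$, which falls out on expanding each argument by the bicharacter property. The identity $\theta_{A^*} = \theta_A^*$ reduces to $\beta(g^{-1},g^{-1}) = \beta(g,g)$: from $\beta(g^{-1},g^{-1})\beta(g,g^{-1}) = \beta(e,g^{-1}) = 1$ and $\beta(g,g^{-1})\beta(g,g) = \beta(g,e) = 1$ one concludes $\beta(g^{-1},g^{-1}) = \beta(g,g)$ immediately. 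Naturality of $\sigma$ and $\theta$ is automatic, because any grade-preserving $\mathbf{k}$-linear map commutes with a scalar action determined only by the grades, completing the verification.
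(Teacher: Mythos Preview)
The paper does not prove this proposition at all; it is introduced as ``a standard result from the theory of fusion categories'' and immediately used. Your verification therefore goes well beyond what the paper supplies, and the computations you outline for the ribbon axioms are correct: the expansion $\beta(gh,gh)=\beta(g,g)\beta(g,h)\beta(h,g)\beta(h,h)$ follows from multiplicativity in each slot, and your chain $\beta(g^{-1},g^{-1})=\beta(g,g^{-1})^{-1}=\beta(g,g)$ is exactly the right way to get $\theta_{A^*}=\theta_A^*$.

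You have also put your finger on a genuine imprecision in the proposition as stated. Writing out the first hexagon on homogeneous objects of degrees $g_1,g_2,g_3$ and using the bicharacter property to cancel $\beta$ leaves the residual condition $a(g_2,g_3,g_1)\,a(g_1,g_2,g_3)=a(g_2,g_1,g_3)$, which is not the cocycle identity and need not hold for an arbitrary $3$-cocycle $a$. Your resolution---read the hypothesis as requiring that $(a,\beta)$ be an Eilenberg--MacLane abelian $3$-cocycle, or equivalently pass to a representative with $a\equiv 1$---is the standard fix, and in fact the paper only ever uses the case $a\equiv 1$ (``We use the trivial 3-cocycle to induce the (strict) associator''). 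So your caveat is well taken and does no damage to anything downstream.
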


We thus have a large class of categories from which  Petit's construction \cite{P2008} gives rise to invariants of smooth 4-manifolds:  $\mathbf{Vec}_H \subset \mathbf{Vec}_G$ for any group-subgroup pair $G \supset H$, with associator given by a 3-cocycle on $G$ and braiding and ribbon structure induced by a bicharacter on $G$.  Moreover, as all of the simple objects are invertible, their quantum dimensions are all 1, so that $\Delta_{\mathbf{Vec}_\Gamma} = |\Gamma|$ for $\Gamma = H, G$.  Also because of this, in this section we omit any mention of basepoints, as for these examples ${\mathsf d}$ is the identity natural transformation.

We wish to give examples of suitable Frobenius algebras in some of these and modules over them which satisfy the hypotheses of the main theorem, and thus give rise to invariants of 4-manifolds equipped with an embedded surface.


\begin{definition}
{\em A graded matrix algebra}  $M(n,\Gamma) $ in ${\mathbb Vec}_{A}$, for $A$ an abelian group is the object
\[ \bigoplus_{i,j =1}^n \mathbf{k}_{g_{ij}} \]

\noindent for $\Gamma = [g_{ij}]_{n\times n}$ a matrix of elements of $A$ satisfying
\[ \forall i,j,k \; g_{ij} + g_{jk} = g_{ij},  \]

\noindent equipped with matrix multiplication in the obvious sense as multiplication, unit the identity matrix $I$ (with the 1's on the diagonal all necessarily being of degree $0$).

It is a Frobenius algebra when equipped with the functional 
\[ tr = \sum_{i=1}^n p_{ii}: M(n,\Gamma) \rightarrow I=\mathbf{k}_{0} \]

\noindent which induces a pairing $(X,Y)\mapsto tr(XY)$, thereby inducing a copairing and the Frobenius comultiplication $\Delta$ (in terms of which the copairing is $\Delta(I)$), as happens when the classical Frobenius algebra structure on a matrix algebra is rewritten as an example of the categorical notion.
\end{definition}

To construct the desired example of a category-subcategory pair suitable for the Broda-Petit construction with a Frobenius algebra $F$ in the subcategory ${\mathcal B}$ and module over $M$ in the ambient category ${\mathcal C}$ for which none $F$, $M$ and $M^*\otimes M$ is either transparent or $\mathcal B$-transparent, but the image of
    \[(m \otimes m)1 \otimes \Delta(1_F) \otimes 1):M^* \otimes  M \rightarrow M^*\otimes M \]
    
\noindent is $\mathcal B$-transparent we consider an abelian group $G$ and a proper subgroup $H$ which contains an element $c \neq 0$

For any fixed $g\in G\setminus H$, the object $X_g = \mathbf{k}_{c+g}\oplus \mathbf{k}_g$, regarded as column vectors with lower entry of degree $g$, is then left module over $M(2,\Gamma_c)$, where 
\[ \Gamma_c := \left[ 
\begin{array}{cc}
1 & c \\ 
-c & 1 
\end{array} \right]. \]

Its dual object is $X_g^* = \mathbf{k}_{-c-g}\oplus \mathbf{k}_{-g}$, regarded as row vectors, and is a right module over $M(2,\Gamma_c)$.

By construction $F$ lies in $\mathbf{Vec}_H$ and $X_g$ does not. If $c$ and $g$ are chosen so that objects homogeneous of degrees $c$ and at least one of $g$ or $c+g$ are not transparent, our non-transparency desideratum is satisfied.  To see that we have invariance under the swimming moves we proceed as follows:

An easy calculation shows that

\[ \Delta(I) = \left[ 
\begin{array}{cc}
1 & 0 \\ 
0 & 0 
\end{array} \right] \otimes 
\left[ 
\begin{array}{cc}
1 & 0 \\ 
0 & 0 
\end{array} \right] +
\left[ 
\begin{array}{cc}
0 & 1 \\ 
0 & 0 
\end{array} \right] \otimes
\left[ 
\begin{array}{cc}
0 & 0 \\ 
1 & 0 
\end{array} \right]  \]

 \[ + \left[ 
\begin{array}{cc}
0 & 0 \\ 
1 & 0 
\end{array} \right] \otimes
\left[ 
\begin{array}{cc}
0 & 1 \\ 
0 & 0 
\end{array} \right] +
\left[ 
\begin{array}{cc}
0 & 0 \\ 
0 & 1 
\end{array} \right] \otimes
\left[ 
\begin{array}{cc}
0 & 0 \\ 
0 & 1 
\end{array} \right] \]

\noindent in which we have suppressed writing the degrees.  Recall the diagonal entries are of degree $0$, that in the upper right is of degree $c$ and that in the lower left is of degree $-c$.

We can thus calculate the image of this element acting on the left on $X_g^*$ and on the right on $X_g$, the object which must be transparent for the swimming moves to hold by calculating its action on generic elements:

{\footnotesize \[\left[ 
\begin{array}{cc}
x_{-c-g} & y_{-g}
\end{array} \right]\Bigg(\left[ 
\begin{array}{cc}
1 & 0 \\ 
0 & 0 
\end{array} \right] \otimes 
\left[ 
\begin{array}{cc}
1 & 0 \\ 
0 & 0 
\end{array} \right] +
\left[ 
\begin{array}{cc}
0 & 1 \\ 
0 & 0 
\end{array} \right] \otimes
\left[ 
\begin{array}{cc}
0 & 0 \\ 
1 & 0 
\end{array} \right]  \]

\[+ 
\left[ 
\begin{array}{cc}
0 & 0 \\ 
1 & 0 
\end{array} \right] \otimes
\left[ 
\begin{array}{cc}
0 & 1 \\ 
0 & 0 
\end{array} \right] 
+
\left[ 
\begin{array}{cc}
0 & 0 \\ 
0 & 1 
\end{array} \right] \otimes
\left[ 
\begin{array}{cc}
0 & 0 \\ 
0 & 1 
\end{array} \right]\Bigg) 
\left[ 
\begin{array}{c}
z_{c +g}  \\ 
w_{g}  
\end{array} \right]\]}

{\footnotesize \[ 
\left[ 
\begin{array}{cc}
x_{-c-g} & 0
\end{array} \right] \otimes
\left[ 
\begin{array}{c}
z_{c+g}  \\ 
0  
\end{array} \right] +
\left[ 
\begin{array}{cc}
0 & x_{-g}
\end{array} \right] \otimes
\left[ 
\begin{array}{c}
0  \\ 
z_{g}  
\end{array} \right] +
\left[ 
\begin{array}{cc}
y_{-c-g} & 0
\end{array} \right] \otimes
\left[ 
\begin{array}{c}
w_{c+g}  \\ 
0  
\end{array} \right] \]
\[
+ \left[ 
\begin{array}{cc}
0 & y_{-g}
\end{array} \right] \otimes
\left[ 
\begin{array}{c}
0  \\ 
w_{g}  
\end{array} \right]\]}
\smallskip

\noindent each summand of which is an element homogenous of degree $0$, thus showing the desired image is transparent, and thus {\em a fortiori} $\mathcal B$-transparent as required.

Similar calculations show that in this instance, $k$ and $\kappa$ are both 2.

A minimal example illustrating the desired features is obtained by letting $G = ({\mathbb Z}/\ell m, +)$, and $H$ be the copy of $({\mathbb Z}/\ell, +)$ generated by $m$.  This group has an obvious bicharacter $\beta(a,b) = \zeta^{ab}$ where $\zeta$ is a primitive $\ell m^{th}$-root of unity, thereby creating a braiding on ${\mathcal C} = {\bf Vec}_G$.  We use the trivial 3-cocycle to induce the (strict) associator.
Taking ${\mathcal B} = {\bf Vec}_H$,
choosing  $c \in \{m, 2m, 3m, \ldots (\ell-1)m\}$ and $g \in {\mathbb Z}/\ell m \setminus \{0, m, 2m, \ldots (\ell-1)m\}$ and using $F=M(2,\gamma_c)$ and $M = X_g$ give us our desiderata.

Calculating we find that $\Delta_{\mathcal B} = \ell$, $\Delta_{\mathcal C}\Delta_{{\mathcal B},{\mathcal C}}^{\prime \prime} = m^2\ell$

We now give some sample calculations of our invariants for this sort of initial data, concluding each with actual numerical values in the special case $\ell = 3, m = 2, c = 2,\zeta = \frac{1}{2} + \frac{\sqrt{3}}{2}i$ and $g =1$ 

The 4-sphere has invariant 1 (being given by an empty Kirby diagram), while the 4-sphere with 2-unlink of $n$ components (represented by the bandless unknot of $n$ components) has invariant $2^n$ (2 being the quantum dimension of the module $M$ used to color the unlink representing the surface).  A 4-sphere with a trivally embedded torus, whose banded unknot presentation $L_{\rm torus}$ is given below
\[\includegraphics[width=1in]{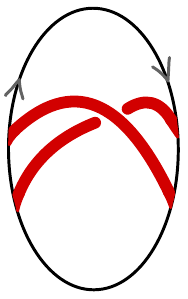}\]
\noindent has invariant $\frac{1}{2}$. A tedious calculation shows that $\big <L_{\rm torus} \big>$ is 2, so dividing by $k^2$ for the two self-bands gives $\frac{1}{2}$. 

Calculating the invariant for the simplest knotted 2-sphere in $S^4$, the spun trefoil, which has banded unknot presentation given below, 
\[\includegraphics[width=1.7in]{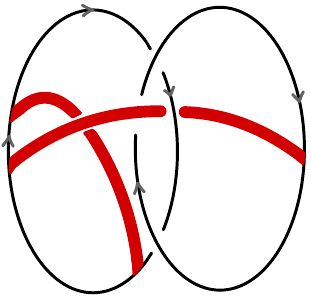}\]
\noindent gives 1, distinguishing it from the unknotted 2-sphere, whose invariant was 2.

We can calculate the invariant for ${\mathbb C}{\mathbb P}^2$, presented as an unknot with a single maximum, single minimum and a negative crossing as follows:

\[\includegraphics[width=1.7in]{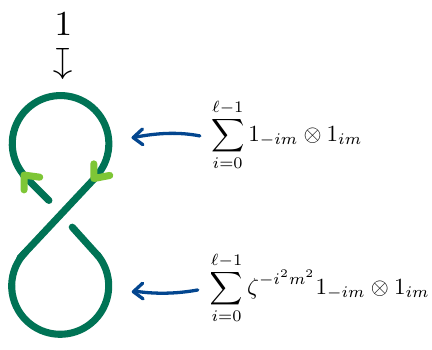}\]
\begin{eqnarray*}
1 & \mapsto & \sum_{i=0}^{\ell-1} 1_{-im} \otimes 1_{im} \\
 & \mapsto & \sum_{i=0}^{\ell-1} \zeta^{-i^2m^2} 1_{-im} \otimes 1_{im} \\
 & \mapsto & \sum_{i=0}^{\ell-1} \zeta^{-i^2m^2}
\end{eqnarray*}

Taking the particular case of $\ell = 3, m = 2, \zeta = \frac{1}{2} + \frac{\sqrt{3}}{2}i$ this gives $1 + \zeta^{-4} + \zeta^{-16} = 1 + 2\zeta^2 = \sqrt{3}i$.  Note, that as the only eigenvalue of the linking matrix in this case is $-1$ the normalization factor is 1.

It is easy to see that the value of $\big< - \big>$ on the diagram with the crossing reversed to present $\overline{{\mathbb C}{\mathbb P}}^2$ is the complex conjugate, and thus with Petit's normalization that $I_+(\overline{{\mathbb C}{\mathbb P}}^2) = \frac{1}{3}\sum_{i=0}^{\ell-1} \zeta^{-i^2m^2}$, which in our chosen particular case gives $-\frac{\sqrt{3}}{3}i$.  

Now, consider a sphere embedded in ${\mathbb C}{\mathbb P}^2$ (or rather, the (0,2)-handlebody completion of which by adding a 4-handle gives ${\mathbb C}{\mathbb P}^2$), which represents the homology class $nE$ where E is the exceptional divisor, that is the class of the sphere at infinity, and $n\in {\mathbb Z}$.  In the banded-unknot presentation of Hughes et. al \cite{HKM19} this is given by a single (bandless) unlink component linking the (-1)-framed 2-handle attaching curve of the simplest Kirby diagram for ${\mathbb C}{\mathbb P}^2$ with linking number $n$.

Calculating as above gives
\[\includegraphics[width=1.1in]{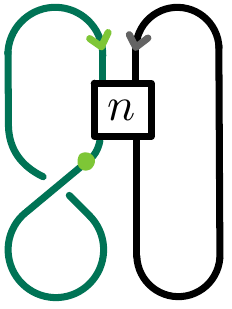}\]
\begin{eqnarray*}
1 & \mapsto & \left( \sum_{i=0}^{\ell-1} 1_{-im}\otimes 1_{im}\right)\otimes\left( \left[ \begin{array}{c} 1_{c+g} \\ 0 \end{array} \right]\otimes \left[ \begin{array}{cc} 1_{-c-g} & 0 \end{array} \right] + \left[ \begin{array}{c} 0 \\ 1_g \end{array} \right]\otimes \left[ \begin{array}{cc} 0 & 1_{-g} \end{array} \right]\right) \\
 & = & \left( \sum_{i=0}^{\ell-1} 1_{-im}\otimes 1_{im}\otimes \left[ \begin{array}{c} 1_{c+g} \\ 0 \end{array} \right]\otimes \left[ \begin{array}{cc} 1_{-c-g} & 0 \end{array} \right]\right) + \\
 & & \;\;\;\;\;\;\;\;\;\;\;\ \left( \sum_{i=0}^{\ell-1} 1_{-im}\otimes 1_{im}\otimes \left[ \begin{array}{c} 0 \\ 1_g \end{array} \right]\otimes \left[ \begin{array}{cc} 0 & 1_{-g} \end{array} \right]\right) \\
 & \mapsto & \left( \sum_{i=0}^{\ell-1} \zeta^{-i^2m^2 + 2ink(c+g)} 1_{-im}\otimes 1_{im}\otimes \left[ \begin{array}{c} 1_{c+g} \\ 0 \end{array} \right]\otimes \left[ \begin{array}{cc} 1_{-c-g} & 0 \end{array} \right]\right) + \\
 & & \;\;\;\;\;\;\;\;\;\;\;\ \left( \sum_{i=0}^{\ell-1} \zeta^{-i^2m^2 + 2inkg} 1_{-im}\otimes 1_{im}\otimes \left[ \begin{array}{c} 0 \\ 1_g \end{array} \right]\otimes \left[ \begin{array}{cc} 0 & 1_{-g} \end{array} \right]\right) \\
  & \mapsto &  \sum_{i=0}^{\ell-1} \zeta^{-i^2m^2 + 2ink(c+g)} + \zeta^{-i^2m^2 + 2inkg}
 \end{eqnarray*}
 
 Thus the value is a sum of two generalized quadratic Gauss sums. In general, if $c$ and $g$ are chosen correctly, the generalized Gauss sums will pick out the multiples of the exceptional divisor which are 0 modulo $\ell$.
 
 Taking the particular case of $\ell = 3, m = 2, c = 2, \zeta = \frac{1}{2} + \frac{\sqrt{3}}{2}i$ and $g =1$ gives $\frac{3}{2} + \frac{\sqrt{3}}{2}i$ if $n\neq 0$ mod 3, and $2\sqrt{3}i$ if $n=0$ mod 3.  Note in this case, since $b_0$, $b_+$, $s$ and $\omega$ are all zero the normalizing factors in the denominator are all 1.

 The example of the sphere $S^2\times \{0\} \subset S^2\times {\mathbb C}$ illustrates the lack of invariance under cancellation of a 2-handle which necessarily intersects all elements of the surface's ambient isotopy class in a (0,1,2)-handlebody.  The open 4-manifold is a handlebody with a single 0-handle and a single 2-handle attached on a 0-framed unknot.  The surface-4-manifold pair is presented by a Hopf link one component of which is the 2-handle attaching curve, and the other component of which is the unlink in a (band-free) banded unlink presentation of the sphere.

 Evaluating in our general example gives

 \[ \frac{1}{\ell}\sum_{j=0}^{\ell-1} \zeta^{-2(c+g)mj} + \zeta^{-2gmj}, \]

 \noindent which in our specific example with $\ell = 3, m = 2, c = 2, \zeta = \frac{1}{2} + \frac{\sqrt{3}}{2}i$ and $g =1$ gives 3 for the evaluation of the diagram and 1 for the invariant.  (The first summands in each term of the indexed summation are each 1, while the second summands add to 0).

As completing the handle-body to a closed 4-manifold by adjoining a 3-handle cancelling the 2-handle and a 4-handle to give the 4-sphere results in a trivially embedded 2-sphere, were our scalar also invariant under this move, the result should be the same as for an unlink consisting of a 0-framed unknotted 2-handle attaching curve and an unknot giving the(band-free) banded unlink presentation of the sphere, which as we noted before evaluates to 2 for any instance of our explicitly given initial data.

 \section{Directions for future research}
 
 Several obvious directions for future research arise from the present paper. 

 Most basic is the calculation of the invariants explicitly presented here for more substantial examples.

 Beyond the scope of this paper, but worth immediate investigation is whether there is a way of normalizing our invariants, using only the combinatorial data of a Kirby diagram with banded unlink, so that they become invariant under 2-3-handle cancellations in which the 2-handle intersects a part of the surface that cannot be isotoped out of the handle?  If this can be done, the result would be give an invariant of smooth surface-closed-4-manifold pairs.  If such a renormalization is available, it would then be worthwhile to find further renormalization that is insensitive to the addition of trivally embedded handles to the surface, thus giving rise to a functional on the second homology of the 4-manifold, which would be an invariant of the 4-manifold itself. Although we suspect Reutter's \cite{Reu20} ``no go'' theorem would extend to such an invariant, it is not immediately obvious that it does.

 Others involve seeking better initial data. 

 In particular, can our constructions be carried out in the non-semisimple setting provided by Costantino et al. \cite{CGHP23}?
 
But, even remaining in Petit's semisimple setting, are there examples in which the image of the copairing acting between the dual module and the module is not transparent, but only ${\mathcal B}$-transparent?  If the image of the ``copairing acting in the middle'' is transparent in the ambient category $\mathcal C$, rather than only $\mathcal B$-transparent, as in the examples we have explicitly constructed herein, the invariant will also be unchanged by the ungeometric ``band swim'' moves of 1-handle attaching curves and components of the surface-encoding unlink.  This is disappointing, as invariance under geometrically nonsensical moves generally weakens the topological significance of an invariant. Initial data in which the image is only ${\mathcal B}$-transparent, but not transparent, would not be invariant under these non-geometric moves and thus would be expected to yield more sensitive topological invariants.

Of course, as always for any new ``quantum'' topological invariant, there is the vexing question of giving a sensible geometric interpretation of the information it provides.

 Finally, our fully functorial approach to ``Kirby coloring'' using an endomorphism, rather than a formal linear combination of simple objects, leads to the question of which ribbon categories have objects admitting suitable endomorphisms and actions by other objects so that the analogue of Lemma 3.3 of B\"{a}renz and Barrett \cite{BB18} gives rise to handle-slide invariance.  Objects admitting chromatic maps in the sense of Costantino et al. \cite{CGHP23} certainly are among these, and could conceivably provide a complete solution if the projectivity conditions required for their definition cannot in some way be dispensed with to give a more general construction.

\appendix
\section*{Addendum and Errata}
\addcontentsline{toc}{section}{Addendum and Errata}
In our paper ``Embedded surface invariants via the Broda--Petit construction'' \cite{LY}, we constructed invariants of embedded surfaces in a four dimensional (0,1,2)-handlebody, and extended the construction to what we termed ``boundary links'' under the same relations encoding isotopies of surfaces in Hughes, Kim, and Miller\cite{HKM}.  The extension gave different results for two unknots in the 3-dimensional boundary of an Akbulut cork, one carried to the other by the diffeomorphism of the boundary which extended to a homeomorphism of the whole handlebody, but not to a diffeomorphism, one of which was slice and encoded an embedded sphere, the other of which was not, and thus did not encode a surface.  Discussions with other mathematicians about the construction revealed several errata in the paper, clarified some aspect of the construction and highlighted the importance of understanding the geometric content of the ``slice isotopy'' relation up to which the extension to boundary links is invariant.

First, the simple errata:  On page 2550015-23 in the last line of the computation at the top of the page, the variable $k$ should be removed from the second summand in the exponent of the second sum, and the two summands parenthesize so that the large summation applies to both.  And in the last line of page 2550015-26, the word ``not'' should be removed.


Turning to the properly mathematical content of this note, discussions with David Auckly \cite{A} revealed that as used by us in the context of $(0,1,2)$-handlebodies, the encoding of surfaces admits some ambiguity.  Consider the following example due to Auckly of a Kirby diagram with banded unlink:  

\[\includegraphics[width=2in]{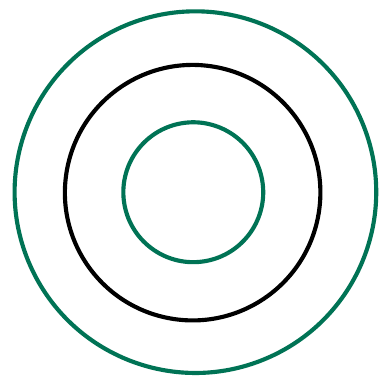}\]

There are two surfaces which can be obtained from the banded unlink, one of which goes over the 2-handle whose attaching curve is outermost in the diagram, the other of which goes over the other 2-handle.  These surfaces represent different homology classes in the $(0,1,2)$-handlebody, and thus are not isotopic.  In Hughes et al. \cite{HKM} the uniqueness of the surface encoded requires that the Kirby diagram be encoding a closed 4-manifold \cite{M}.  In this example adding 3-handles and a 4-handle gives for either of the surfaces in the $(0,1,2)$-handlebody an unknotted 2-sphere in the 4-sphere. 

In such circumstances, of course, our invariant will not be able to distinguish the two surfaces encodable by the same diagram.

The next pair of examples, suggested by Anna Beliakova \cite{B}, raises a number of questions about the extension to boundary links -- links lying in the intersection of the boundary of the handlebody and the boundary of the 0-handle.  Consider the Kirby diagram consisting of a Hopf link, one component of which is a 2-handle attaching curve, and the other component of which is dotted (indicating a location from which to bore out a 2-handle, thereby attaching a 1-handle).  The resulting handlebody is, of course, a 4-ball.  But if we consider a pair of boundary links, in one case linking the 2-handle attaching curve, in the other linking the dotted curve, as shown in the following

\[\includegraphics[width=4in]{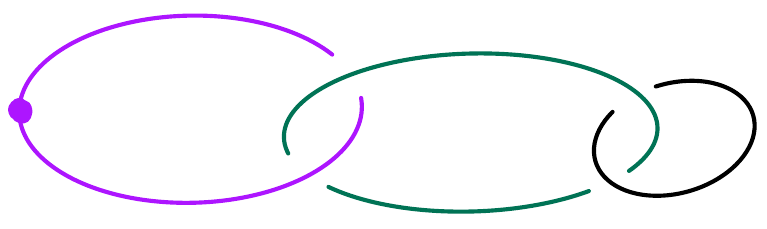}\]

\[\includegraphics[width=4in]{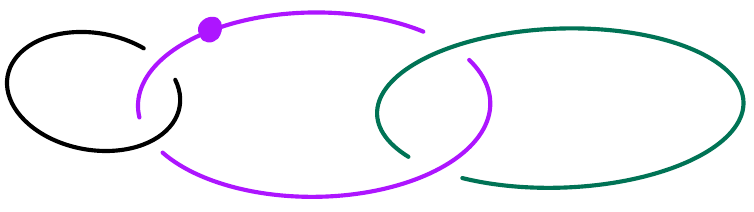}\]

\noindent The invariant of boundary links gives the quantum dimension of the color of the boundary link in the first case (since a slide over the dotted component unlinks the boundary link, and the normalization cancels the contribution of the Kirby diagram), while it gives 0 in the second case (since no summand of the tensor product of the coloring of the 2-handle curve with the coloring of the link lies in the M\"{u}ger center).  Obviously there is no distinction of smooth structure being made in this case, though our invariants behave as they did in our Akblut examples.

Discussions with Maggie Miller \cite{M} clarified for us the feature this is actually detecting:  one of the consequences of placing a surface in horizontal-vertical position (see \cite{HKM}) so that it can be represented by a banded unlink, is that the unlink of the banded unlink must be slice {\em in the $(0,1)$-handlebody}.  Of course, the boundary link in the second example above is not slice in the (0,1)-handlebody.

This reveals that, at least in the first instance, what is being detected in our examples from Akbulut is not that one boundary link is slice in the whole (0,1,2)-handlebody, while the other is not, but that one is slice in the (0,1)-handlebody, while the other is not.  This distinction alone seems insufficient to distinguish the smooth structures, leading to the following questions:

\begin{itemize}
    \item What is the geometric content of the equivalence relation on boundary links that we called slice isotopy -- equivalence under HKM moves \cite{HKM} applied to boundary links that do not necessarily encode a surface?
    \item Under what circumstances, if any, does non-sliceness in the (0,1)-handlebody imply non-sliceness in the (0,1,2)-handlebody?   
    \item Does there exist a pair of embedded surfaces in an Akbulut cork, related by a cork twist which are distinguished by any of our invariants?
\end{itemize}

An affirmative answer to the last question, or an answer to the second question which would apply to our examples from Akbulut would realize our hope that, despite being constructed from semisimple data, our invariants can catch a glimpse of exotic smooth structures.



\begin{thebibliography}{99}

\bibitem{BB18} B\"{a}renz, M. and Barrett, J., ``Dichromatic state-sum models for four-manifolds from pivotal functors,'' {\em Comm. Math. Phys.}, 360,  (2018) 663–714, doi.org/10.1007/s00220-017-3012-9 .

\bibitem{Br97} Broda, B., ``A gauge-field approach to 3- and 4-manifold invariants,'' {\em Symplectic Singularities and Geometry of Gauge Fields}, Banach Center Publications, Vol. 39, Polish Academy of Sciences, Warsaw (1997) 201-209.

\bibitem{Br00} Brugi\`{e}res, A., ``Cat\'{e}gories pr\'{e}modulaires, modularisations et invariants de vari\'{e}t\'{e}s de dimension 3,'' {\em Math. Ann.} {\bf 316} (2000) 215-236.

\bibitem{CGHP23} Costantino, F., Geer, N., Ha\"{i}oun, B., and Patureau-Mirand, B., ``Skein (3+1)-TQFTs from Non-semisimple Ribbon Categories,'' arXiv:2306.03225v1.

\bibitem{EGNO} Etingof, P., Gelaki, S., Nikshych,D., and Ostrik, V., {\em Tensor categories}, Mathematical Surveys and Monographs,Vol. 205, American Mathematical Soc. (2016).

\bibitem{FY.cohere} Freyd, P., and Yetter, D.N., ``Coherence Theorems via Knot Theory'',  
{\em J. Pure and App. Alg.},  78 (1992) 47-76.

\bibitem{HKM19}  Hughes, M., Kim, S.\ and Miller, M., ``Isotopies of surfaces in 4-manifolds via banded unlink diagrams,''
{\em Geometry \& Topology} {\bf 24} (2020) 1519–1569
DOI: 10.2140/gt.2020.24.1519. also
 arXiv:1804.09169.v4.

\bibitem{HKM21} Hughes, M., Kim, S.\ and Miller, M., ``Band diagrams of immersed surfaces in 4-manifolds,'' arXiv:2109.12794.v1.

\bibitem{K} Kirby, R., ``The calculus of framed links in $S^3$,'' {\em Invent. Math.} 45 (1978) 35-56.

\bibitem{CWM} Mac Lane, S., {\em Categories for the Working Mathematician} (2nd ed.) Springer (1978).

\bibitem{L06} Lauda, A., ``Frobenius algebras and ambidextrous adjunctions,'' {\em Theory and 
Applications of Categories} {\bf 16} (2006) 84-122.

\bibitem{N} Nikshych, D., ``Classifying braidings on fusion categories,'' arXiv:1801.06125v2 (2018).

\bibitem{P2008} Petit, J., ``The dichromatic invariants of smooth 4-manifolds,'' {\em Global Journal of Pure and Applied Mathematics} {\bf 4}(3) (2008) 1–16.

\bibitem{Rei32} Reidemeister, K., {\em Knottentheorie}, Springer-Verlag, Berlin (1932).

\bibitem{Reu20}  Reutter, D., ``Semisimple 4-dimensional topological field theories cannot detect exotic smooth structure,'' arXiv:2001.02288.

\bibitem{RT} Reshetikhin, N. and Turaev, V.G., ``Invariants of 3-manifolds via link polynomials and quantum groups,'' {\em Invent. Math.} 103 (1991) 547-597.

\bibitem{Shum} Shum, M.-C., ``Tortile tensor categories,'' {\em J. Pure and App. Alg.} 93 (1) (1994) 57-110.

\bibitem{Y.FTTD} Yetter, D.N., ``Framed Tangles and a Theorem of Deligne on Braided Deformations of Tannakian Categories'',  in {\it Deformation Theory and Quantum Groups with Applications to Mathematical Physics} (M.
Gerstenhaber and J.D. Stasheff,  eds.) AMS Contemp. Math. vol. 134 (1992) 325-350.

\bibitem{A} Auckly, D. private communication.

\bibitem{B} Beliakova, A. private communication.

\bibitem{HKM}
Hughes, M., Kim, S.\ and Miller, M., ``Isotopies of surfaces in 4--manifolds via banded unlink diagrams,'' {\em Geometry \& Topology} {\bf 24} (2020) 1519--1569. DOI: 10.2140/gt.2020.24.1519. Also arXiv:1804.09169v4.

\bibitem{LY}
Lee, I.\,J. and Yetter, D.\,N., ``Embedded surface invariants via the Broda--Petit construction,'' {\em Journal of Knot Theory and Its Ramifications} {\bf 34} (5) (2025) 2550015 ff. doi:10.1142/S0218216525500154.

\bibitem{M} Miller, M. private communication.

\end{thebibliography}
\end{document}